\documentclass[11pt, twoside]{article}
\usepackage{mathrsfs}
\usepackage{amsfonts}
\usepackage{authblk}
%%%%%%%%%%%%%%%%%%%%%%%%%%%%%%%%%%%%%%%%%%%%%%%%%%%%%%%%%%%%%%%%%%%%%%%%%%%%%%%%%%%%%%%%%%%%%%%%%%%
\usepackage{amsmath,amssymb}
\usepackage{multicol}
\usepackage{multicol,graphics}
\usepackage{graphics}
\usepackage{epstopdf}
\usepackage[dvips]{graphicx}
\usepackage{latexsym}
\usepackage{mathrsfs}
\usepackage{amsmath}
\usepackage[all]{xy}
\usepackage{graphicx}
\usepackage{amssymb}
\usepackage{color}
\usepackage{amsthm}
\usepackage{txfonts}
\usepackage{bbm}% for \mathbbm{1} \etree
\usepackage{tikz}
%\setcounter{MaxMatrixCols}{10}
%TCIDATA{OutputFilter=Latex.dll}
%TCIDATA{Version=5.00.0.2552}
%TCIDATA{<META NAME="SaveForMode" CONTENT="1">}
%TCIDATA{Created=Tue Mar 25 16:30:12 2003}
%TCIDATA{LastRevised=Thursday, September 14, 2006 15:46:45}
%TCIDATA{<META NAME="GraphicsSave" CONTENT="32">}
%TCIDATA{Language=American English}
%
%\newtheorem{theorem}{Theorem}[section]
%\newtheorem{lemma}[theorem]{Lemma}
%\newtheorem{corollary}[theorem]{Corollary}
%\newtheorem{observation}[theorem]{Observation}
%\newtheorem{example}[theorem]{Example}
%\newtheorem{assumption}[theorem]{Assumption}
%\newtheorem{definition}[theorem]{Definition}
%\newtheorem{remark}[theorem]{Remark}
%\newtheorem{claim}[theorem]{Claim}
%\newtheorem{case}[theorem]{Case}
%\newtheorem{proof of claim}[theorem]{Proof of Claim}
%\newtheorem{proposition}[theorem]{Proposition}
%\numberwithin{equation}{section}
%\newenvironment{proof}[1][Proof]{\noindent\textbf{#1.} }{\hfill $\Box$}
%\allowdisplaybreaks
%\renewcommand {\baselinestretch}{1.2}
%\makeatletter\setlength{\textwidth}{15.0cm} \setlength{\oddsidemargin}{1.0cm}
%\setlength{\evensidemargin}{1.0cm} \setlength{\textheight}{21.0cm}
%\pagestyle{myheadings}\markboth{$~$ \hfill {\sc } \hfill $~$} {$~$ \hfill {\rm } \hfill$~$}
    %was    1, 1.5 for double sp
%=====================================================================
\topmargin -.8cm \textheight 22.8cm \oddsidemargin 0cm \evensidemargin -0cm \textwidth 16.3cm
%========================================================================================%%wide

%%%%%%%%%%%%%%%%%%%%%%%% Statements
\newtheorem{theorem}{Theorem}[section]
\newtheorem{prop}[theorem]{Proposition}
\theoremstyle{definition}
\newtheorem{defn}[theorem]{Definition}
\newtheorem{lemma}[theorem]{Lemma}
\newtheorem{coro}[theorem]{Corollary}
\newtheorem{prop-def}{Proposition-Definition}[section]
\newtheorem{coro-def}{Corollary-Definition}[section]

\newtheorem{remark}[theorem]{Remark}

\newtheorem{exam}[theorem]{Example}

%==========================================================================

\newcommand{\nc}{\newcommand}
%==========================================================================
\nc{\tred}[1]{\textcolor{red}{#1}}
\nc{\tblue}[1]{\textcolor{blue}{#1}}
\nc{\tgreen}[1]{\textcolor{green}{#1}}
\nc{\tpurple}[1]{\textcolor{purple}{#1}}
\nc{\btred}[1]{\textcolor{red}{\bf #1}}
\nc{\btblue}[1]{\textcolor{blue}{\bf #1}}
\nc{\btgreen}[1]{\textcolor{green}{\bf #1}}
\nc{\btpurple}[1]{\textcolor{purple}{\bf #1}}
\nc{\NN}{{\mathbb N}}
\nc{\ncsha}{{\mbox{\cyr X}^{\mathrm NC}}} \nc{\ncshao}{{\mbox{\cyrX}^{\mathrm NC}_0}}

%=========================================================================

\newcommand{\efootnote}[1]{}
%========================================================================

\renewcommand{\textbf}[1]{}
%========================================================================

\newcommand{\delete}[1]{}

%\delete{
\nc{\mlabel}[1]{\label{#1}}  % Use this to suppress names
\nc{\mcite}[1]{\cite{#1}}  % Use this to suppress names
\nc{\mref}[1]{\ref{#1}}  % Use this to suppress names
\nc{\mbibitem}[1]{\bibitem{#1}} % Use this to show number
%}

\delete{% Use the next two lines to show names
\nc{\mlabel}[1]{\label{#1}{\hfill \hspace{1cm}{\bf{{\ }\hfill(#1)}}}}
\nc{\mcite}[1]{\cite{#1}{{\bf{{\ }(#1)}}}}  % Use this lines to show names
\nc{\mref}[1]{\ref{#1}{{\bf{{\ }(#1)}}}}  % Use this lines to show names
\nc{\mbibitem}[1]{\bibitem[\bf #1]{#1}} % Use this to show name
}

%=========================================================================rooted trees examples

\newcommand{\tdun}[1]{\begin{picture}(10,5)(-2,-1)% 一个点装饰x
\put(0,0){\circle*{2}}
\put(3,-2){\tiny #1}
\end{picture}}

\newcommand{\tduns}[1]{\begin{picture}(10,5)(-2,-1)% 一个点装饰\sigma
\put(0,0){\circle*{2}}
\put(3,-2){\tiny $\sigmaup$}
\end{picture}}

\newcommand{\tddeuxs}[2]{\begin{picture}(12,5)(0,-1)% 两个点装饰\sigma
\put(3,0){\circle*{2}}
\put(3,0){\line(0,1){5}}
\put(3,5){\circle*{2}}
\put(6,-2){\tiny $\sigmaup$}
\put(6,3){\tiny $\sigmaup$}
\end{picture}}

\newcommand{\tddeuxx}[2]{\begin{picture}(12,5)(0,-1)% 两个点装饰x
\put(3,0){\circle*{2}}
\put(3,0){\line(0,1){5}}
\put(3,5){\circle*{2}}
\put(6,-2){\tiny #1}
\put(6,3){\tiny #2}
\end{picture}}

\newcommand{\tdtroisuns}[3]{\begin{picture}(20,12)(-5,-1)% 三个点装饰\sigma
\put(3,0){\circle*{2}}
\put(-0.65,0){$\vee$}
\put(6,7){\circle*{2}}
\put(0,7){\circle*{2}}
\put(5,-2){\tiny $\sigmaup$}
\put(9,5){\tiny $\sigmaup$}
\put(-5,5){\tiny $\sigmaup$}
\end{picture}}

\newcommand{\tdtroisdeux}[3]{\begin{picture}(12,12)(-2,-1)
\put(0,0){\circle*{2}}
\put(0,0){\line(0,1){5}}
\put(0,5){\circle*{2}}
\put(0,5){\line(0,1){5}}
\put(0,10){\circle*{2}}
\put(3,-2){\tiny #1}
\put(3,3){\tiny #2}
\put(3,9){\tiny #3}
\end{picture}}

%=======================
\newcommand{\squares}[4]{\begin{picture}(12,5)(0,-1)% 两个点装饰\sigma
\put(3,0){\circle*{2}}
\put(8,0){\circle*{2}}
\put(8,5){\circle*{2}}
\put(3,5){\circle*{2}}
\put(3,0){\line(0,1){5}}
\put(3,0){\line(1,0){5}}
\put(8,0){\line(0,1){5}}
\put(3,5){\line(1,0){5}}
\put(-2,-2){\tiny #1}
\put(-2,4){\tiny #2}
\put(11,-2){\tiny #3}
\put(11,4){\tiny #4}
\end{picture}}
\newcommand{\squarea}[4]{\begin{picture}(12,5)(0,-1)% 两个点装饰\sigma
\put(3,0){\circle*{2}}
\put(8,0){\circle*{2}}
\put(8,5){\circle*{2}}
\put(3,5){\circle*{2}}
\put(3,0){\line(0,1){5}}
\put(3,0){\line(1,0){5}}
\put(8,0){\line(0,1){5}}
\put(-2,-2){\tiny #1}
\put(-2,4){\tiny #2}
\put(11,-2){\tiny #3}
\put(11,4){\tiny #4}
\end{picture}}
\newcommand{\squareb}[4]{\begin{picture}(12,5)(0,-1)% 两个点装饰\sigma
\put(3,0){\circle*{2}}
\put(8,0){\circle*{2}}
\put(8,5){\circle*{2}}
\put(3,5){\circle*{2}}
\put(3,0){\line(0,1){5}}
\put(3,0){\line(1,0){5}}
\put(3,5){\line(1,0){5}}
\put(-2,-2){\tiny #1}
\put(-2,4){\tiny #2}
\put(11,-2){\tiny #3}
\put(11,4){\tiny #4}
\end{picture}}
\newcommand{\squarec}[4]{\begin{picture}(12,5)(0,-1)% 两个点装饰\sigma
\put(3,0){\circle*{2}}
\put(8,0){\circle*{2}}
\put(8,5){\circle*{2}}
\put(3,5){\circle*{2}}
\put(3,0){\line(0,1){5}}
\put(8,0){\line(0,1){5}}
\put(3,5){\line(1,0){5}}
\put(-2,-2){\tiny #1}
\put(-2,4){\tiny #2}
\put(11,-2){\tiny #3}
\put(11,4){\tiny #4}
\end{picture}}
\newcommand{\squared}[4]{\begin{picture}(12,5)(0,-1)% 两个点装饰\sigma
\put(3,0){\circle*{2}}
\put(8,0){\circle*{2}}
\put(8,5){\circle*{2}}
\put(3,5){\circle*{2}}
\put(3,0){\line(1,0){5}}
\put(8,0){\line(0,1){5}}
\put(3,5){\line(1,0){5}}
\put(-2,-2){\tiny #1}
\put(-2,4){\tiny #2}
\put(11,-2){\tiny #3}
\put(11,4){\tiny #4}
\end{picture}}
\newcommand{\para}[2]{\begin{picture}(12,5)(0,-1)% 两个点装饰\sigma
\put(3,0){\circle*{2}}
\put(8,0){\circle*{2}}
\put(3,0){\line(1,0){5}}
\put(-2,-2){\tiny #1}
\put(11,-2){\tiny #2}
\end{picture}}

\newcommand{\trpoina}[3]{\begin{picture}(12,5)(0,-1)% 两个点装饰\sigma
\put(3,0){\circle*{2}}
\put(8,0){\circle*{2}}
\put(8,5){\circle*{2}}
%\put(3,5){\circle*{2}}
\put(3,0){\line(1,0){5}}
\put(8,0){\line(0,1){5}}
%\put(3,5){\line(1,0){5}}
\put(-2,-2){\tiny #1}
%\put(-2,4){\tiny #2}
\put(11,-2){\tiny #2}
\put(11,4){\tiny #3}
\end{picture}}

\newcommand{\trpoinb}[3]{\begin{picture}(12,5)(0,-1)% 两个点装饰\sigma
\put(3,0){\circle*{2}}
\put(8,0){\circle*{2}}
%\put(8,5){\circle*{2}}
\put(3,5){\circle*{2}}
\put(3,0){\line(1,0){5}}
\put(3,0){\line(0,1){5}}
%\put(3,5){\line(1,0){5}}
\put(-2,-2){\tiny #1}
\put(-2,4){\tiny #2}
\put(11,-2){\tiny #3}
%\put(11,4){\tiny #3}
\end{picture}}

\newcommand{\trpoinc}[3]{\begin{picture}(12,5)(0,-1)% 两个点装饰\sigma
\put(3,0){\circle*{2}}
%\put(8,0){\circle*{2}}
\put(8,5){\circle*{2}}
\put(3,5){\circle*{2}}
%\put(3,0){\line(1,0){5}}
\put(3,0){\line(0,1){5}}
\put(3,5){\line(1,0){5}}
\put(-2,-2){\tiny #1}
\put(-2,4){\tiny #2}
%\put(11,-2){\tiny #3}
\put(11,4){\tiny #3}
\end{picture}}

\newcommand{\trpoind}[3]{\begin{picture}(12,5)(0,-1)% 两个点装饰\sigma
%\put(3,0){\circle*{2}}
\put(8,0){\circle*{2}}
\put(8,5){\circle*{2}}
\put(3,5){\circle*{2}}
%\put(3,0){\line(1,0){5}}
\put(8,0){\line(0,1){5}}
\put(3,5){\line(1,0){5}}
%\put(-2,-2){\tiny #1}
\put(-2,4){\tiny #1}
\put(11,-2){\tiny #2}
\put(11,4){\tiny #3}
\end{picture}}
%=========================

%=========================================================================
%%%%%%%%%%%%%%%%%%%% new symbols
\nc{\opa}{\ast} \nc{\opb}{\odot} \nc{\op}{\bullet} \nc{\pa}{\frakL}
\nc{\arr}{\rightarrow} \nc{\lu}[1]{(#1)} \nc{\mult}{\mrm{mult}}
\nc{\diff}{\mathfrak{Diff}}
\nc{\opc}{\sharp}\nc{\opd}{\natural}
\nc{\ope}{\circ}
\nc{\dpt}{\mathrm{d}}
\nc{\hck}{H_{RT}}
\nc{\vdf}{\calf}
\nc{\ldf}{\calf_\ell}
\nc{\hlf}{H}
\nc{\onek}{\mathbf{1}_\bfk}
%%%%%%%%%%%%%%%%%%%%%%% symbols
\nc{\diam}{alternating\xspace}
\nc{\Diam}{Alternating\xspace}
\nc{\cdiam}{canonical alternating\xspace}
\nc{\Cdiam}{Canonical alternating\xspace}
\nc{\AW}{\mathcal{A}}

\nc{\ari}{\mathrm{ar}}

\nc{\lef}{\mathrm{lef}}

\nc{\Sh}{\mathrm{ST}}

\nc{\Cr}{\mathrm{Cr}}

\nc{\st}{{Schr\"oder tree}\xspace}
\nc{\sts}{{Schr\"oder trees}\xspace}

%%%%%%%%%%%%%%%%%%%%%%%%%%%%%%%%%%%%%%%%%%%%%
\nc{\vertset}{\Omega} % set of vertex decorations
\nc{\pb}{{\mathrm{pb}}}
\nc{\Lf}{{\mathrm{Lf}}}
\nc{\lft}{{left tree}\xspace}
\nc{\lfts}{{left trees}\xspace}
\nc{\fat}{{fundamental averaging tree}\xspace}
\nc{\fats}{{fundamental averaging trees}\xspace}
\nc{\avt}{\mathrm{Avt}}
\nc{\rass}{{\mathit{RAss}}}
\nc{\aass}{{\mathit{AAss}}}
\nc{\vin}{{\mathrm Vin}}    %decoration set of indices
\nc{\lin}{{\mathrm Lin}}    %decoration set of leaves
\nc{\inv}{\mathrm{I}n}
\nc{\gensp}{V} % space of generators
\nc{\genbas}{\mathcal{V}} % basis of the space of generators
\nc{\bvp}{V_P}     % Rota-Baxter generating space
\nc{\gop}{{\,\omega\,}}     % generic binary operation

\nc{\bin}[2]{ (_{\stackrel{\scs{#1}}{\scs{#2}}})}  %binomial coeff
\nc{\binc}[2]{ \left (\!\! \begin{array}{c} \scs{#1}\\
    \scs{#2} \end{array}\!\! \right )}  %binomial coeff
\nc{\bincc}[2]{  \left ( {\scs{#1} \atop
    \vspace{-1cm}\scs{#2}} \right )}  %binomial coeff
\nc{\bs}{\bar{S}} \nc{\cosum}{\sqsubset} \nc{\la}{\longrightarrow}
\nc{\rar}{\rightarrow} \nc{\dar}{\downarrow} \nc{\dprod}{**}
\nc{\dap}[1]{\downarrow \rlap{$\scriptstyle{#1}$}}
\nc{\md}{\mathrm{dth}} \nc{\uap}[1]{\uparrow
\rlap{$\scriptstyle{#1}$}} \nc{\defeq}{\stackrel{\rm def}{=}}
\nc{\disp}[1]{\displaystyle{#1}} \nc{\dotcup}{\
\displaystyle{\bigcup^\bullet}\ } \nc{\gzeta}{\bar{\zeta}}
\nc{\hcm}{\ \hat{,}\ } \nc{\hts}{\hat{\otimes}}
\nc{\barot}{{\otimes}} \nc{\free}[1]{\bar{#1}}
\nc{\uni}[1]{\tilde{#1}} \nc{\hcirc}{\hat{\circ}} \nc{\lleft}{[}
\nc{\lright}{]} \nc{\lc}{\lfloor} \nc{\rc}{\rfloor}
\nc{\curlyl}{\left \{ \begin{array}{c} {} \\ {} \end{array}
    \right .  \!\!\!\!\!\!\!}
\nc{\curlyr}{ \!\!\!\!\!\!\!
    \left . \begin{array}{c} {} \\ {} \end{array}
    \right \} }
\nc{\longmid}{\left | \begin{array}{c} {} \\ {} \end{array}
    \right . \!\!\!\!\!\!\!}
\nc{\onetree}{\bullet} \nc{\ora}[1]{\stackrel{#1}{\rar}}
\nc{\ola}[1]{\stackrel{#1}{\la}}%${\Bbb Z}$
\nc{\ot}{\otimes} \nc{\mot}{{{\boxtimes\,}}}
\nc{\otm}{\overline{\boxtimes}} \nc{\sprod}{\bullet}
\nc{\scs}[1]{\scriptstyle{#1}} \nc{\mrm}[1]{{\rm #1}}
\nc{\margin}[1]{\marginpar{\rm #1}}   %{\rm #1}}
\nc{\dirlim}{\displaystyle{\lim_{\longrightarrow}}\,}
\nc{\invlim}{\displaystyle{\lim_{\longleftarrow}}\,}
\nc{\mvp}{\vspace{0.3cm}} \nc{\tk}{^{(k)}} \nc{\tp}{^\prime}
\nc{\ttp}{^{\prime\prime}} \nc{\svp}{\vspace{2cm}}
\nc{\vp}{\vspace{8cm}} \nc{\proofbegin}{\noindent{\bf Proof: }}
\nc{\proofend}{$\blacksquare$ \vspace{0.3cm}}
\nc{\modg}[1]{\!<\!\!{#1}\!\!>}
\nc{\intg}[1]{F_C(#1)} \nc{\lmodg}{\!
<\!\!} \nc{\rmodg}{\!\!>\!}
\nc{\cpi}{\widehat{\Pi}}
\nc{\sha}{{\mbox{\cyr X}}}  %used to be \cyr
\nc{\shap}{{\mbox{\cyrs X}}} %sha as product
\nc{\shpr}{\diamond}    %Shuffle product
\nc{\shp}{\ast} \nc{\shplus}{\shpr^+}
\nc{\shprc}{\shpr_c}    %Cartier's product
\nc{\msh}{\ast} \nc{\zprod}{m_0} \nc{\oprod}{m_1}
\nc{\vep}{\varepsilon} \nc{\labs}{\mid\!} \nc{\rabs}{\!\mid}
\nc{\sqmon}[1]{\langle #1\rangle}
%==========================================================================

%==========================================================================
%%%%%%%%%%%%%%%%%%%% roman fonts, in alphabetic order
\nc{\mmbox}[1]{\mbox{\ #1\ }} \nc{\dep}{\mrm{dep}} \nc{\fp}{\mrm{FP}}
\nc{\rchar}{\mrm{char}} \nc{\End}{\mrm{End}} \nc{\Fil}{\mrm{Fil}}
\nc{\Mor}{Mor\xspace} \nc{\gmzvs}{gMZV\xspace}
\nc{\gmzv}{gMZV\xspace} \nc{\mzv}{MZV\xspace}
\nc{\mzvs}{MZVs\xspace} \nc{\Hom}{\mrm{Hom}} \nc{\id}{\mrm{id}}
\nc{\im}{\mrm{im}} \nc{\incl}{\mrm{incl}} \nc{\map}{\mrm{Map}}
\nc{\mchar}{\rm char} \nc{\nz}{\rm NZ} \nc{\supp}{\mathrm Supp}

%=======================================================================
%%%%%%%%%%%%%%%%%% bold face
\nc{\Alg}{\mathbf{Alg}} \nc{\Bax}{\mathbf{Bax}} \nc{\bff}{\mathbf f}
\nc{\bfk}{{\bf k}} \nc{\bfone}{{\bf 1}} \nc{\bfx}{\mathbf x}
\nc{\bfy}{\mathbf y}
\nc{\base}[1]{\bfone^{\otimes ({#1}+1)}} %{{a_{#1}}}
\nc{\Cat}{\mathbf{Cat}}

\nc{\detail}{\marginpar{\bf More detail}
    \noindent{\bf Need more detail!}
    \svp}
\nc{\Int}{\mathbf{Int}} \nc{\Mon}{\mathbf{Mon}}
\nc{\rbtm}{{shuffle }} \nc{\rbto}{{Rota-Baxter }}
\nc{\remarks}{\noindent{\bf Remarks: }} \nc{\Rings}{\mathbf{Rings}}
\nc{\Sets}{\mathbf{Sets}} \nc{\wtot}{\widetilde{\odot}}
\nc{\wast}{\widetilde{\ast}} \nc{\bodot}{\bar{\odot}}
\nc{\bast}{\bar{\ast}} \nc{\hodot}[1]{\odot^{#1}}
\nc{\hast}[1]{\ast^{#1}} \nc{\mal}{\mathcal{O}}
\nc{\tet}{\tilde{\ast}} \nc{\teot}{\tilde{\odot}}
\nc{\oex}{\overline{x}} \nc{\oey}{\overline{y}}
\nc{\oez}{\overline{z}} \nc{\oef}{\overline{f}}
\nc{\oea}{\overline{a}} \nc{\oeb}{\overline{b}}
\nc{\weast}[1]{\widetilde{\ast}^{#1}}
\nc{\weodot}[1]{\widetilde{\odot}^{#1}} \nc{\hstar}[1]{\star^{#1}}
\nc{\lae}{\langle} \nc{\rae}{\rangle}
\nc{\lf}{\lfloor}
\nc{\rf}{\rfloor}
\nc{\indc}[2]{#1[#2]}
%=======================================================================

%%%%%%%%%%%%%%%%%%%%%% math bb fonts

\nc{\QQ}{{\mathbb Q}}
\nc{\RR}{{\mathbb R}} \nc{\ZZ}{{\mathbb Z}}

%==========================================================================
%%%%%%%%%%%%%%%%%%% cal fonts

\nc{\cala}{{\mathcal A}} \nc{\calb}{{\mathcal B}}
\nc{\calc}{{\mathcal C}}
\nc{\cald}{{\mathcal D}} \nc{\cale}{{\mathcal E}}
\nc{\calf}{{\mathcal F}} \nc{\calg}{{\mathcal G}}
\nc{\calh}{{\mathcal H}} \nc{\cali}{{\mathcal I}}
\nc{\call}{{\mathcal L}} \nc{\calm}{{\mathcal M}}
\nc{\caln}{{\mathcal N}} \nc{\calo}{{\mathcal O}}
\nc{\calp}{{\mathcal P}} \nc{\calr}{{\mathcal R}}
\nc{\cals}{{\mathcal S}} \nc{\calt}{{\mathcal T}}
\nc{\calu}{{\mathcal U}} \nc{\calw}{{\mathcal W}} \nc{\calk}{{\mathcal K}}
\nc{\calx}{{\mathcal X}} \nc{\CA}{\mathcal{A}}

%==========================================================================
%%%%%%%%%%%%%%%%%%  frak fonts
\nc{\fraka}{{\mathfrak a}} \nc{\frakA}{{\mathfrak A}}
\nc{\frakb}{{\mathfrak b}} \nc{\frakB}{{\mathfrak B}}
\nc{\frakD}{{\mathfrak D}} \nc{\frakF}{\mathfrak{F}}
\nc{\frakf}{{\mathfrak f}} \nc{\frakg}{{\mathfrak g}}
\nc{\frakH}{{\mathfrak H}} \nc{\frakL}{{\mathfrak L}}
\nc{\frakM}{{\mathfrak M}} \nc{\bfrakM}{\overline{\frakM}}
\nc{\frakm}{{\mathfrak m}} \nc{\frakP}{{\mathfrak P}}
\nc{\frakN}{{\mathfrak N}} \nc{\frakp}{{\mathfrak p}}
\nc{\frakS}{{\mathfrak S}} \nc{\frakT}{\mathfrak{T}}
\nc{\frakX}{{\mathfrak X}}
\nc{\BS}{\mathbb{S
}}

\font\cyr=wncyr10 \font\cyrs=wncyr7
%=========================================================================
\nc{\xing}[1]{\textcolor{red}{Xing:#1}}
\nc{\meng}[1]{\textcolor{blue}{xiaomeng: #1}}
%\nc{\xing}[1]{\textcolor{purple}{Xing:#1}}
\nc{\revise}[1]{\textcolor{red}{#1}}

%%%%%%%%% gaoxing command
\nc{\ID}{{\rm I}}\nc{\lbar}[1]{\overline{#1}}\nc{\bre}{{\rm bre}}
\nc{\sd}{\cals}\nc{\rb}{\rm RB}\nc{\A}{\rm A}\nc{\LL}{\rm L}\nc{\tx}{\tilde{X}}
\nc{\col}{\Delta_{G}}\nc{\mul}{m_{G}}\nc{\epl}{\varepsilon_{G}}
\nc{\cog}{\Delta_{\epsilon}}
\nc{\coc}{\Delta_{Cay}}\nc{\muc}{m_{Cay}}\nc{\uc}{u_{Cay}}\nc{\epc}{\varepsilon_{Cay}}
\nc{\hl}{H_{G}}\nc{\arro}[1]{#1}\nc{\px}{P_{\tx}}\nc{\pw}{P_{\mathfrak{w}}}\nc{\pl}{B^+}
\nc{\pp}{\pl}\nc{\ppp}[1]{B^+(#1)}\nc{\dw}{\diamond_{\mathfrak{w}}}\nc{\dl}{\diamond_{\rm \ell}}
\nc{\ncshaw}{\sha^{{\rm NC}}_{\mathfrak{w}}}\nc{\ncshal}{\sha^{{\rm NC}}_{{\rm \ell}}}
\nc{\ver}{\rm V}\nc{\ld}{l}\nc{\del}{\Delta_{{\rm \ell}}}\nc{\epsl}{\varepsilon_{{\rm \ell}}}
\nc{\uul}{u_{{\rm \ell}}}\nc{\oneh}{\mathbf{1}}\nc{\onew}{\mathbf{1}}
\nc{\etree}{\mathbbm{1}} \nc{\conc}{m_{G}} \nc{\medmid}{{\,~{\tiny \longmid}~\,}}
\nc{\leql}{\leq_{\text{l}}} \nc{\leqh}{\leq_{\text{h}}}
\nc{\leqhl}{\leq_{\text{h,l}}}  \nc{\lhl}{<_{\text{h,l}}} \nc{\Cay}{\rm C\,}
\nc{\graphc}{\mathfrak{G}}\nc{\hopfc}{\mathfrak{H}} \nc{\ul}{\etree} \nc{\sg}{\mathfrak{S}}

\begin{document}
\title{A Hopf algebra on subgraphs of a graph}
\author{Xiaomeng Wang$^\dagger$, Shou-Jun Xu, Xing Gao}

\renewcommand{\thefootnote}{\fnsymbol{footnote}}

%\footnotetext[1]{The first two authors contributed equally to this paper.}

%\footnotetext[2]{Corresponding authors.}

\footnotetext {E-mail address: wangxm2015@lzu.edu.cn}

\affil[]{\small School of Mathematics and Statistics, Gansu Key Laboratory of Applied \authorcr Mathematics  and Complex Systems, Lanzhou University, Lanzhou, Gansu 730000,  China}

\date{}

\maketitle
\begin{abstract}
In this paper, we construct a bialgebraic and further a Hopf algebraic structure on top of subgraphs of a given graph.
Further, we give the dual structure of this Hopf algebraic structure.
We study the algebra morphisms induced by graph  homomorphisms, and obtain a covariant functor from a graph category to an algebra category.

\vspace{0.5cm}
{\bf Keywords}: Hopf algebra, covariant functor, graph category, algebra category
\end{abstract}

%\subjclass[2010]{
%16W99, %Rings and algebras with additional structure
%%16S10, %Rings determined by universal properties
%16T05, %Hopf algebras and their applications
%%08B20, %free algebras
%16T10, %Bialgebras
%16T30. %Connections with combinatorics
%}

\section{Introduction}

The Hopf algebraic theory has a long history and broad applications in mathematics and physics~\cite{CK, Fo0, Gal, KC, Kr2, MM, Mo, Swb, ZGG}.
The Hopf algebra of rooted forests originated from the work of A. D\"{u}r~\cite{DA} and had a very rich theory.
For example, A. Connes and D. Kreimer introduced the Connes-Kreimer Hopf algebra of rooted forests in~\cite{CK,Kr} to study the renormalization of perturbative
quantum field theory. In~\cite{CK, CK1, CK2}, they introduced commutative Hopf algebras of graphs to study the combinatorial structure of renormalization.
It is also related to the Loday-Ronco Hopf algebra~\cite{LR, ZG18} and Grossman-Larson Hopf algebra~\cite{GL} of rooted trees. L. Foissy  has done a lot of work on the Hopf algebra of rooted forests~\cite{Fo0, Fo1, Fo2, Fo3}.
As related results, various infinitesimal bialgebras  on decorated rooted forests have
been established in~\cite{Gw,GZ, ZCGL18}, via different Hochschild 1-cocycle conditions.

A totally assigned graph is a graph with a total order on its edges.
In~\cite{Dh}, G. E. Duchamp et al. built a Hopf algebra of totally assigned graphs.
L. Foissy~\cite{Fo16} constructed a Hopf algebra on all graphs to insert chromatic polynomial into the theory of combinatorial Hopf algebra,
and gave a new proof of some classical results.

The concept of graph (incidence) Hopf algebras was introduced by Schmitt~\cite{Sch}. Namely, a graph algebra is a
commutative, cocommutative, graded, connected Hopf algebra, whose basis elements correspond to finite graphs.
Humpert and Martin~\cite{Hm} obtained a new nonrecursive formula for the antipode of a graph (incidence) Hopf algebra.
In~\cite{Lan}, the author introduced the $4$-bialgebra of graphs that satisfies some relations.

The study of Hopf algebras of graphs is extensive, but the research on Hopf algebras
of subgraphs has no result yet. Subgraph plays an important role in the study of graph theory.
For example, we can study the prefect graph by the independence number and clique number of the induced subgraph.
We can determine if a graph is an interval map by the induced subgraph and many more.

In the present paper, we construct a Hopf algebraic structure on subgraphs of a given graph.
In particular, a combinatorial description of the coproduct is given.
Narrowing ourself to the algebraic part of the aforementioned Hopf algebra on subgraphs of a given graph, we
obtain an algebra morphism from a graph homomorphism. Using the language of categories, we obtain a covariant functor
from the category of graphs to the category of algebras.

Here is the structure of the paper.
In Section~\mref{sec:CKHOPHAL}, we first review the definitions of free monoid and free module.
Then we proceed to give an algebraic structure on subgraphs of a graph. % (Lemma~\mref{le:leunial}).
 Next, we construct a coalgebraic structure on subgraphs of a graph (Lemma~\mref{le:lecounicoal}), thereby we construct a bialgebra on the subgraph (Theorem~\mref{th:bialg}).
Finally, to make the coproduct $\col$ more explicit, we describe it in a combinatorial way (Eq.~(\mref{eq:comni})).

In Section~\mref{sec:hopfalg}, continuing the line in Section~\mref{sec:CKHOPHAL}, a Hopf algebra structure on subgraphs of a graph is given (Theorem~\mref{th:thcg}).
Further, we study the dual Hopf algebra structure. We conclude with a description of the dual Hopf algebra (Theorem~\mref{le:ledup}).
This section is also devoted to an algebra morphism induced by a graph homomorphism (Theorem~\mref{th:grhop}).
As an application, we obtain a functor from graph category to algebra category~(Theorem~\mref{th:thcf}).

{\bf Notation.} Graphs considered in this paper are connected and undirected graphs without multiple edges and loops. We will be working over a unitary commutative base ring $\bfk$.
%By an algebra we mean a unitary associative $\bfk$-algebra
%and by a coalgebra we mean a counitary coassociative $\bfk$-coalgebra, unless otherwise stated.
Linear maps and tensor products are taken over $\bfk$.
%Denote by $[n]$ the set $\{1,\,2,\,\cdots,\,n\}$.
For an algebra $A$, we view $A\ot A$ as an $A$-bimodule via
\begin{equation}
a\cdot(b\otimes c):=ab\otimes c\,\text{ and }\, (b\otimes c)\cdot a:= b\otimes ca.
\mlabel{eq:dota}
\end{equation}

\section{A bialgebra on subgraphs of a given graph}
\mlabel{sec:CKHOPHAL}
In this section, we build a bialgebraic structure on top of subgraphs of a given graph.

\subsection{An algebraic structure on subgraphs of a given graph}\mlabel{subs:sub1}
Let us first review some notations on graphs and algebras, which are used throughout the remainder of the paper.
For a graph $G$, denote by $V(G)$ and $E(G)$ the vertex set and edge set of $G$, respectively.

\begin{defn}~\cite{Dies} Let $G$ be a graph.
\begin{enumerate}
\item If its $n$ vertices are distinguished from one another by labels such as $\{1, 2\cdots, n\}$, then $G$ is said to be {\bf labeled}.

\item If there is a path between any two vertices of $G$, then $G$ is {\bf connected}.

\item A {\bf subgraph} of a graph $G$ is simply a graph, all of whose vertices belong to $V(G)$ and all of whose edges belong to $E(G)$.

\item For $S\subseteq V(G)$, the {\bf induced subgraph} $G[S]$ in $G$ by $S$ is the graph whose edge set consists of all of the edges in $G$ that have both endpoints in $S$.
\end{enumerate}
\end{defn}

The concepts of free monoid and free module are needed later.

\begin{defn}~\cite{Hob}
\begin{enumerate}
\item A {\bf semigroup} is a nonempty set $S$ together with a binary operation $\cdot: S\times S\to S$ which is associative:
$$(x\cdot y)\cdot z =  x\cdot(y\cdot z)\,\text{ for all }\, x, y, z\in S.$$
A semigroup $S$  is called a {\bf monoid} if it contains an element $1$ with the property that
$$x\cdot 1=1\cdot x=x\,\text{ for all }\, x\in S.$$
\item
Let $(S,\cdot_S,1_S)$ and $(T,\cdot_T,1_T)$ be monoids. A map $\phi: S\rightarrow T$ is called a {\bf monoid morphism} if
$$\phi(x\cdot_Sy)=\phi(x)\cdot_T\phi(y)\,\,\text{ and }\,\,\phi(1_S)=1_T\,\text{ for all }\, x, y\in S.$$

\item
The {\bf free monoid } on a set $X$ is a monoid $S$ together with a map $i: X\rightarrow S$ with the property that, for any monoid $T$ and map $\phi: X\rightarrow T$, there exists a unique monoid morphism $\psi: S\rightarrow T$ such that $\psi\circ i=\phi$.
\end{enumerate}
\end{defn}

\begin{defn}~\cite{Hun}
\begin{enumerate}
\item Let $\bfk$ be a ring. A (left) {\bf $\bfk$-module} is an additive abelian group $M$ together with a function $\bfk\times M\rightarrow M$, $(k,x)\mapsto kx$ such that for all $k,l\in \bfk$ and $x,y\in M$:
    \begin{enumerate}
    \item $k(x+y)=kx+ky;$
    \item $(k+l)x=kx+lx;$
    \item $k(lx)=(kl)x.$
    \end{enumerate}

\item Let $M$ and $N$ be modules over ring $\bfk$. A map $f: M\rightarrow N$ is a {\bf $\bfk$-module morphism} if
$$f(x+y)=f(x) + f(y)\,\,\text{ and }\,\,f(kx)=kf(x) \,\text{ for all }\, x, y\in M,\,\, k\in \bfk.$$

\item
The {\bf free $\bfk$-module} on a set $X$ is a $\bfk$-module $F$ together with a set map $i: X\rightarrow F$ with
the property that, for any $\bfk$-module $M$ and map $f: X\rightarrow M$, there exists a unique $\bfk$-module morphism $\bar{f}: F\rightarrow M$ such that $\bar{f} \circ i=f$.
\end{enumerate}
\end{defn}

A $\bfk$-module $M$ is free if and only if it has a $\bfk$-linear basis~\cite{Hun}.

{\it In the rest of this paper,} let $G$ be a graph with vertices labelled by natural numbers and let $\calg$ be the set of nonempty connected subgraphs of $G$.
Let $M(\calg)=\langle\calg\rangle$ be the free monoid on $\calg$ in which the multiplication is the concatenation, denoted by $\mul$ and usually suppressed.
The unit in $M(\calg)$ is the empty graph, denote by $\etree$.
An element $F$ in $M(\calg)$ is a noncommutative product of connected subgraphs in $\calg$:
$$F = \Gamma_1\cdots \Gamma_n, \,\text{ where } n\geq 0\,\text{ and }\, \Gamma_1, \cdots, \Gamma_n\in \calg.$$
Here we employ the convention that $F = \etree$ whenever $n=0$. We define $\bre(F):=n$ to be the {\bf breadth} of $F$.
For example, $$\bre(\etree)=0,\,\bre(\tdun{1})=1,\,\bre(\tddeuxx{$\tiny 1$}{$\tiny 2$}\tddeuxx{$\tiny 1$}{$\tiny 2$}\tdun{1}\tdun{2})=4.$$
Denote by $$\hlf(\calg):= \bfk M(\calg)$$ the free $\bfk$-module spanned by $M(\calg)$.
By linearity, the multiplication $\mul$ can be extended to $\hlf(\calg)$ and then
the triple $(H(\calg),\,\mul,\,\ul)$ is an algebra. Indeed, it is the free algebra on the set $\calg$.

\begin{exam}
Here are some examples of $\hlf(\calg)$.
\begin{enumerate}
\item If $G=\bullet_1$, then $\calg=\{\bullet_1\}$, $ M(\calg)=\langle   \bullet_1\rangle$ and $\hlf(\calg)=\bfk\langle  \bullet_1\rangle$.

\item If $G=\tddeuxx{$\tiny 1$}{$\tiny 2$}$, then
\begin{align*}
\calg=\{\tdun{1},\tdun{2},\tddeuxx{$\tiny 1$}{$\tiny 2$}\},\, M(\calg)=\langle  \tdun{1},\tdun{2},\tddeuxx{$\tiny 1$}{$\tiny 2$}\rangle\,\text{ and }\,\hlf(\calg)=\bfk\langle  \tdun{1},\tdun{2},\tddeuxx{$\tiny 1$}{$\tiny 2$}\rangle.
\end{align*}

\item If $G=\squares{$2$}{$1$}{$3$}{$4$}$\,\,, then
\begin{align*}
\calg=& \ \{\tdun{1},\tdun{2},\tdun{3},\tdun{4},\tddeuxx{$\tiny 2$}{$\tiny 1$},\tddeuxx{$\tiny 3$}{$\tiny 4$},\para{$1$}{$4$}\,\,,\,\,\para{$2$}{$3$}\,\,,\,\,\trpoina{$2$}{$3$}{$4$}\,\,,\,\,\trpoinb{$2$}{$1$}{$3$}\,\,,\,\,\trpoinc{$2$}{$1$}{$4$}\,\,,\,\,
\trpoind{$1$}{$3$}{$4$}\,\,,\,\,\squarea{$2$}{$1$}{$3$}{$4$}\,\,,\,\,\squareb{$2$}{$1$}{$3$}{$4$}\,\,,
\,\,\squarec{$2$}{$1$}{$3$}{$4$}\,\,,\,\,\squared{$2$}{$1$}{$3$}{$4$}\,\,,\,\,\squares{$2$}{$1$}{$3$}{$4$}\,\,\},\\
M(\calg)=&\ \langle  \tdun{1},\tdun{2},\tdun{3},\tdun{4},\tddeuxx{$\tiny 2$}{$\tiny 1$},\tddeuxx{$\tiny 3$}{$\tiny 4$},\para{$1$}{$4$}\,\,,\,\,\para{$2$}{$3$}\,\,,\,\,\trpoina{$2$}{$3$}{$4$}\,\,,\,\,\trpoinb{$2$}{$1$}{$3$}\,\,,\,\,\trpoinc{$2$}{$1$}{$4$}\,\,,\,\,
\trpoind{$1$}{$3$}{$4$}\,\,,\,\,\squarea{$2$}{$1$}{$3$}{$4$}\,\,,\,\,\squareb{$2$}{$1$}{$3$}{$4$}\,\,,
\,\,\squarec{$2$}{$1$}{$3$}{$4$}\,\,,\,\,\squared{$2$}{$1$}{$3$}{$4$}\,\,,\,\,\squares{$2$}{$1$}{$3$}{$4$}\,\,\rangle,\\
\hlf(\calg)=& \bfk\langle  \tdun{1},\tdun{2},\tdun{3},\tdun{4},\tddeuxx{$\tiny 2$}{$\tiny 1$},\tddeuxx{$\tiny 3$}{$\tiny 4$},\para{$1$}{$4$}\,\,,\,\,\para{$2$}{$3$}\,\,,\,\,\trpoina{$2$}{$3$}{$4$}\,\,,\,\,\trpoinb{$2$}{$1$}{$3$}\,\,,\,\,\trpoinc{$2$}{$1$}{$4$}\,\,,\,\,
\trpoind{$1$}{$3$}{$4$}\,\,,\,\,\squarea{$2$}{$1$}{$3$}{$4$}\,\,,\,\,\squareb{$2$}{$1$}{$3$}{$4$}\,\,,
\,\,\squarec{$2$}{$1$}{$3$}{$4$}\,\,,\,\,\squared{$2$}{$1$}{$3$}{$4$}\,\,,\,\,\squares{$2$}{$1$}{$3$}{$4$}\,\,\rangle.
\end{align*}
\mlabel{ex:itex3}
\end{enumerate}
\mlabel{ex:exs}
\end{exam}

\subsection{A coproduct on subgraphs of a given graph}\mlabel{subs:sub2}
In this subsection, we construct a coproduct on subgraphs of a graph.
Let us first recall

\begin{defn}~\cite[Definition 2.1]{MM}
A {\bf coalgebra} $(C,\,\Delta,\,\varepsilon)$ over $\bfk$ is a $\bfk$-module $C$ together with morphisms of $\bfk$-modules
$\Delta:= C\rightarrow C\ot C$, called the {\bf coproduct}, and $\varepsilon:= C\rightarrow\bfk$, called the {\bf counit},
such that the  diagrams
$$\xymatrix{
  C \ar[d]_{\Delta} \ar[r]^{\Delta} & C\ot C \ar[d]^{\id_C\ot\Delta} \\
  C\ot C\ar[r]^{\Delta\ot\id_C} & C\ot C\ot C   }$$
and
$$\xymatrix{
  \bfk\ot C
                & C\ot C \ar[l]_{\varepsilon\ot\id_C}\ar[r]^{\id_C\ot\varepsilon}&C\ot\bfk  \\
                & C  \ar[ur]_{\beta_r}\ar[u]^{\Delta}\ar[ul]^{\beta_l}  }$$
are commutative. Here $\beta_l$ and  $\beta_r$ are the isomorphisms defined by
\begin{align*}
&\beta_l:C\rightarrow\bfk\ot C,\,\,x \mapsto 1_\bfk\ot x,\\
&\beta_r:C\rightarrow C\ot \bfk,\,\,x \mapsto x\ot 1_\bfk\,\,\text{ for } x\in C.
\end{align*}
If a coalgebra $(C,\,\Delta,\,\varepsilon)$ satisfies $\tau\circ\Delta=\Delta$, where $\tau$ is a $\bfk$-linear map$$\tau:C\ot C\rightarrow C\ot C,\,\,x\ot y\rightarrow y\ot x\,\,\text{ for }\,\,x, y\in C,$$
 then the coalgebra $(C,\,\Delta,\,\varepsilon)$ is said to be a {\bf cocomutative coalgebra}.
\mlabel{de:decoa}
\end{defn}
\begin{defn}~\cite[p. 54]{Abe}
Let $(C,\,\Delta,\,\varepsilon)$ be a coalgebra. A submodule $D$ of a coalgebra $C$ is called a {\bf $\bfk$-subcoalgebra} of $C$ if $D$ is a coalgebra with the restriction of $\Delta$ and $\varepsilon$.
Further, $D$ is called a  {\bf simple $\bfk$-subcoalgebra} if it does not have any $\bfk$-subcoalgebras other than $\{0\}$ and $D$.
\end{defn}

Now we define a  coproduct $\col$ on $\hlf(\calg)$.
By linearity, we only need to define $\col(F)$ for basis elements $F\in M(\calg)$.
If $\bre(F)=0$, then $F=\etree$ and define
\begin{equation}
\col(F):=\col(\etree) :=\etree\ot\etree.
\mlabel{eq:eqcobi0}
\end{equation}
If $\bre(F)=1$, then $F\in \calg$ and we define
\begin{equation}
\col(F):=\sum_{V_{1}\sqcup V_{2}=V(F)} F[V_{1}]\ot F[V_{2}],
\mlabel{eq:eqcobi}
\end{equation}
where $F[V_{i}]$ are induced subgraphs of $F$ by vertex sets $V_i$ with $i=1,2$.
Note
$$F[V_{1}], F[V_{2}]\in M(\calg)\,\text{ and }\,  \col(F)\in H(\calg) \ot H(\calg).$$
If $\bre(F)>1$, then $F=\Gamma_1\cdots\Gamma_n$ for some $n\geq 2$ and $\Gamma_1, \ldots, \Gamma_n\in \calg$, and define
\begin{equation}
\col(F):=\col(\Gamma_1\cdots\Gamma_n):=\col(\Gamma_1)\cdots\col(\Gamma_n).
\mlabel{eq:eqcobi2}
\end{equation}
Again note that $$\col(F)\in H(\calg) \ot H(\calg)\,\text{ by } \col(\Gamma_i)\in H(\calg) \ot H(\calg)\,\text{ for each } i=1, \cdots, n.$$

Let us give an example for better insight into $\col$.
\begin{exam} Let $G=\tddeuxx{$\tiny 1$}{$\tiny 2$}$. Then $\tddeuxx{$\tiny 1$}{$\tiny 2$}$ and
$\tddeuxx{$\tiny 1$}{$\tiny 2$}\tddeuxx{$\tiny 1$}{$\tiny 2$}$ are in $M(\calg)$, and
\begin{align*}
\col\left(\tddeuxx{$\tiny 1$}{$\tiny 2$}\right)= &\ 1\ot\tddeuxx{$\tiny 1$}{$\tiny 2$}+\tdun{1}\ot\tdun{2}+\tdun{2}\ot\tdun{1}+\tddeuxx{$\tiny 1$}{$\tiny 2$}\ot 1,\\
\col\left(\tddeuxx{$\tiny 1$}{$\tiny 2$}\tddeuxx{$\tiny 1$}{$\tiny 2$}\right)=&\ \col\left(\tddeuxx{$\tiny 1$}{$\tiny 2$}\right)\col\left(\tddeuxx{$\tiny 1$}{$\tiny 2$}\right)\\
=&\ 1\ot\tddeuxx{$\tiny 1$}{$\tiny 2$}\tddeuxx{$\tiny 1$}{$\tiny 2$}+\tdun{1}\ot\tddeuxx{$\tiny 1$}{$\tiny 2$}\tdun{2}+\tdun{2}\ot\tddeuxx{$\tiny 1$}{$\tiny 2$}\tdun{1}+\tddeuxx{$\tiny 1$}{$\tiny 2$}\ot\tddeuxx{$\tiny 1$}{$\tiny 2$}\\
&+\tdun{1}\ot\tdun{2}\tddeuxx{$\tiny 1$}{$\tiny 2$}+\tdun{1}\tdun{1}\ot\tdun{2}\tdun{2}+\tdun{1}\tdun{2}\ot\tdun{2}\tdun{1}+\tdun{1}\tddeuxx{$\tiny 1$}{$\tiny 2$}\ot\tdun{2}\\
&+\tdun{2}\ot\tdun{1}\tddeuxx{$\tiny 1$}{$\tiny 2$}+\tdun{2}\tdun{1}\ot\tdun{1}\tdun{2}+\tdun{2}\tdun{2}\ot\tdun{1}\tdun{1}+\tdun{2}\tddeuxx{$\tiny 1$}{$\tiny 2$}\ot\tdun{1}\\
&+\tddeuxx{$\tiny 1$}{$\tiny 2$}\ot\tddeuxx{$\tiny 1$}{$\tiny 2$}+\tddeuxx{$\tiny 1$}{$\tiny 2$}\tdun{1}\ot\tdun{2}+\tddeuxx{$\tiny 1$}{$\tiny 2$}\tdun{2}\ot\tdun{1}+\tddeuxx{$\tiny 1$}{$\tiny 2$}\tddeuxx{$\tiny 1$}{$\tiny 2$}\ot 1.
\end{align*}
\mlabel{ex:ex2.1}
\end{exam}

\subsection{A combinatorial description of the coproduct}\mlabel{subs:combdes}
This subsection is devoted to a combinatorial description of the coproduct $\col$ given
in Subsection~\mref{subs:sub2}.

Let $G$ be a graph and $F=\Gamma_1\cdots\Gamma_n\in M(\calg)$ with $\Gamma_1,\cdots,\Gamma_n\in\calg$. Note that $V(F)$ is a multiset. For example, let $G=\,\,\squares{$2$}{$1$}{$3$}{$4$}\,\,\,$, $\Gamma=\tddeuxx{$\tiny 1$}{$\tiny 2$}\in \calg$. Take $F:=\Gamma\Gamma=\tddeuxx{$\tiny 1$}{$\tiny 2$}\tddeuxx{$\tiny 1$}{$\tiny 2$}\in M(\calg)$. Then $V(F)=\{1,2,1,2\}$. Now we define
\begin{equation}
\col(F):=\sum_{U\uplus V=V(F)}F[U]\ot F[V],
\mlabel{eq:comni}
\end{equation}
where $U\uplus V$ means that $U$ and $V$ can be expressed as
$$U=U_1\cup\cdots\cup U_n\,\,\text{ and }\,\,V=V_1\cup \cdots \cup V_n,$$
with
$$U_i\sqcup V_i=V(\Gamma_i)\,\,\text{ for }\,\,i=1,\cdots,n.$$
 Further, the coproduct
 $\col$ can be extended to $H(\calg) = \bfk M(\calg)$ by linearity.
Let us compute some examples for better understanding of Eq.~(\mref{eq:comni}).

\begin{exam}
Let $G=\para{$1$}{$2$}\tddeuxx{$3$}{$4$}$ and  $\Gamma_2\Gamma_1=\tddeuxx{$3$}{$4$}\para{$1$}{$2$}\in M(\calg)$ with
$\Gamma_1=\para{$1$}{$2$}$\,\,,\, $\Gamma_2=\tddeuxx{$3$}{$4$}$. Then $$V(\Gamma_1)=\{1,2\}\uplus\varnothing=\{1\}\uplus\{2\}=\{2\}\uplus\{1\}=\varnothing\uplus\{1,2\},$$ $$V(\Gamma_2)=\{3,4\}\uplus\varnothing=\{3\}\uplus\{4\}=\{4\}\uplus\{3\}=\varnothing\uplus\{3,4\},$$
and so
$$\col(\Gamma_1)=\col(\,\,\para{$1$}{$2$}\,\,)=\para{$1$}{$2$}\,\,\ot \etree+\etree\ot\para{$1$}{$2$}+\tdun{$1$}\ot\tdun{$2$}+\tdun{$2$}\ot\tdun{$1$},$$
$$\col(\Gamma_2)=\col(\tddeuxx{$3$}{$4$})=\tddeuxx{$3$}{$4$}\ot \etree+\etree\ot\tddeuxx{$3$}{$4$}+\tdun{$3$}\ot\tdun{$4$}+\tdun{$4$}\ot\tdun{$3$}.$$
Further
\begin{align*}
\col(\tddeuxx{$3$}{$4$}\,\,\para{$1$}{$2$}\,\,)=&\tddeuxx{$3$}{$4$}\,\,\para{$1$}{$2$}\ot \etree+\tddeuxx{$3$}{$4$}\tdun{$1$}\ot\tdun{$2$}+\tddeuxx{$3$}{$4$}\tdun{$2$}\ot\tdun{$1$}+\tddeuxx{$3$}{$4$}\ot\,\,\para{$1$}{$2$}\\
&+\tdun{$3$}\,\,\para{$1$}{$2$}\ot\tdun{$4$}+\tdun{$3$}\tdun{$1$}\ot\tdun{$4$}\tdun{$2$}+\tdun{$3$}\tdun{$2$}\ot\tdun{$4$}\tdun{$1$}
+\tdun{$3$}\ot\tdun{$4$}\,\,\para{$1$}{$2$}\\
&+\tdun{$4$}\,\,\para{$1$}{$2$}\ot\tdun{$3$}+\tdun{$4$}\tdun{$1$}\ot\tdun{$3$}\tdun{$2$}+\tdun{$4$}\tdun{$2$}\ot\tdun{$3$}\tdun{$1$}
+\tdun{$4$}\ot\tdun{$3$}\,\,\para{$1$}{$2$}\\
&+\para{$1$}{$2$}\,\,\ot\tddeuxx{$3$}{$4$}+\tdun{$1$}\ot\tddeuxx{$3$}{$4$}\tdun{$2$}+\tdun{$2$}\ot\tddeuxx{$3$}{$4$}\tdun{$1$}+\etree\ot
\tddeuxx{$3$}{$4$}\,\,\para{$1$}{$2$}\\
=&\col(\tddeuxx{$3$}{$4$})\col(\para{$1$}{$2$}\,\,).
\end{align*}
\end{exam}

\begin{exam}
Consider $G=\tddeuxx{$3$}{$4$}\tdtroisdeux{$1$}{$2$}{$5$}$ and $F=\tddeuxx{$1$}{$2$}\tdtroisdeux{$1$}{$2$}{$5$}\in M(\calg)$. Then
\begin{align*}
\col(\tddeuxx{$1$}{$2$})=&\ \tddeuxx{$1$}{$2$}\ot \etree+\etree\ot\tddeuxx{$1$}{$2$}+\tdun{$1$}\ot\tdun{$2$}+\tdun{$2$}\ot\tdun{$1$},\\
\col\left(\tdtroisdeux{$1$}{$2$}{$5$}\right)=&\ \tdtroisdeux{$1$}{$2$}{$5$}\ot \etree+\etree\ot\tdtroisdeux{$1$}{$2$}{$5$}+\tddeuxx{$1$}{$2$}\ot\tdun{$5$}+\tddeuxx{$2$}{$5$}\ot\tdun{$1$}+\tdun{$1$}\ot
\tddeuxx{$2$}{$5$}+\tdun{$5$}\ot\tddeuxx{$1$}{$2$}+\tdun{$2$}\ot\tdun{$1$}\tdun{$5$}+\tdun{$1$}\tdun{$5$}\ot\tdun{$2$},
\end{align*}
 and
\begin{align*}
\col\left(\tddeuxx{$1$}{$2$}\tdtroisdeux{$1$}{$2$}{$5$}\right)=&\ \tddeuxx{$1$}{$2$}\tdtroisdeux{$1$}{$2$}{$5$}\ot \etree+\tdtroisdeux{$1$}{$2$}{$5$}\ot\tddeuxx{$1$}{$2$}+\tdun{$1$}\tdtroisdeux{$1$}{$2$}{$5$}\ot\tdun{$2$}+\tdun{$2$}\tdtroisdeux{$1$}{$2$}{$5$}\ot\tdun{$1$}\\
&+\tddeuxx{$1$}{$2$}\ot\tdtroisdeux{$1$}{$2$}{$5$}+\etree\ot\tddeuxx{$1$}{$2$}\tdtroisdeux{$1$}{$2$}{$5$}+\tdun{$1$}\ot\tdun{$2$}\tdtroisdeux{$1$}{$2$}{$5$}
+\tdun{$2$}\ot\tdun{$1$}\tdtroisdeux{$1$}{$2$}{$5$}\\
&+\tddeuxx{$1$}{$2$}\tddeuxx{$1$}{$2$}\ot\tdun{$5$}+\tddeuxx{$1$}{$2$}\ot\tddeuxx{$1$}{$2$}\tdun{$5$}+\tdun{$1$}\tddeuxx{$1$}{$2$}\ot\tdun{$2$}\tdun{$5$}
+\tdun{$2$}\tddeuxx{$1$}{$2$}\ot\tdun{$1$}\tdun{$5$}\\
&+\tddeuxx{$1$}{$2$}\tddeuxx{$2$}{$5$}\ot\tdun{$1$}+\tddeuxx{$2$}{$5$}\ot\tddeuxx{$1$}{$2$}\tdun{$1$}+\tdun{$1$}\tddeuxx{$2$}{$5$}\ot\tdun{$2$}\tdun{$1$}
+\tdun{$2$}\tddeuxx{$2$}{$5$}\ot\tdun{$1$}\tdun{$1$}\\
&+\tddeuxx{$1$}{$2$}\tdun{$1$}\ot\tddeuxx{$2$}{$5$}+\tdun{$1$}\ot\tddeuxx{$1$}{$2$}\tddeuxx{$2$}{$5$}+\tdun{$1$}\tdun{$1$}\ot\tdun{$2$}\tddeuxx{$2$}{$5$}
+\tdun{$2$}\tdun{$1$}\ot\tdun{$1$}\tddeuxx{$2$}{$5$}\\
&+\tddeuxx{$1$}{$2$}\tdun{$2$}\ot\tdun{$1$}\tdun{$5$}+\tdun{$2$}\ot\tddeuxx{$1$}{$2$}\tdun{$1$}\tdun{$5$}+\tdun{$1$}\tdun{$2$}\ot
\tdun{$2$}\tdun{$1$}\tdun{$5$}+\tdun{$2$}\tdun{$2$}\ot\tdun{$1$}\tdun{$1$}\tdun{$5$}\\
&+\tddeuxx{$1$}{$2$}\tdun{$5$}\ot\tddeuxx{$1$}{$2$}+\tdun{$5$}\ot\tddeuxx{$1$}{$2$}\tddeuxx{$1$}{$2$}+\tdun{$1$}\tdun{$5$}\ot\tdun{$2$}\tddeuxx{$1$}{$2$}
+\tdun{$2$}\tdun{$5$}\ot\tdun{$1$}\tddeuxx{$1$}{$2$}\\
&+\tddeuxx{$1$}{$2$}\tdun{$1$}\tdun{$5$}\ot \tdun{$2$}+\tdun{$1$}\tdun{$5$}\ot\tddeuxx{$1$}{$2$}\tdun{$2$}+\tdun{$1$}\tdun{$1$}\tdun{$5$}\ot\tdun{$2$}\tdun{$2$}+\tdun{$2$}\tdun{$1$}\tdun{$5$}\ot\tdun{$1$}\tdun{$2$}\\
=&\ \col(\tddeuxx{$1$}{$2$})\col\left(\tdtroisdeux{$1$}{$2$}{$5$}\right).
\end{align*}
\end{exam}

Now we prove that the combinatorial definition of $\col$ in Eq.~(\mref{eq:comni}) is the same as the one given in Subsection~\mref{subs:sub2}.
For this, it suffices to show that $\col$ in Eq.~(\mref{eq:comni}) satisfies Eqs.~(\mref{eq:eqcobi0})---(\mref{eq:eqcobi2}).
For the cases of Eqs.~(\mref{eq:eqcobi0}) and~(\mref{eq:eqcobi}), it follows directly from the definition of $\col$ in Eq.~(\mref{eq:comni}).
For the case of Eq.~(\mref{eq:eqcobi2}), we have

\begin{lemma}
Let $\col$ be given in Eq.~(\mref{eq:comni}) and $\Gamma_1,\cdots,\Gamma_n\in\calg$ with $n\geq 2$. Then
$$\col(\Gamma_1\cdots\Gamma_n)=\col(\Gamma_1)\cdots\col(\Gamma_n).$$
\mlabel{le:lecomo}
\end{lemma}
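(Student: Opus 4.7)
The plan is to unfold both sides of the claimed equality using the definition in Eq.~(\mref{eq:comni}) and then match them term by term via a canonical bijection between the indexing sets. First I would apply Eq.~(\mref{eq:comni}) to each factor on the right-hand side, obtaining
$$\col(\Gamma_i)=\sum_{U_i\sqcup V_i=V(\Gamma_i)}\Gamma_i[U_i]\ot\Gamma_i[V_i]\quad\text{for each }i=1,\ldots,n,$$
and then expand the product $\col(\Gamma_1)\cdots\col(\Gamma_n)$ inside $H(\calg)\ot H(\calg)$. Because the multiplication on $H(\calg)\ot H(\calg)$ is componentwise (the tensor-product algebra structure induced by Eq.~(\mref{eq:dota}), with concatenation on each tensor factor), this product collapses to
$$\sum_{\substack{U_i\sqcup V_i=V(\Gamma_i)\\ i=1,\ldots,n}}\bigl(\Gamma_1[U_1]\cdots\Gamma_n[U_n]\bigr)\ot\bigl(\Gamma_1[V_1]\cdots\Gamma_n[V_n]\bigr).$$

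Next I would apply Eq.~(\mref{eq:comni}) to the left-hand side with $F=\Gamma_1\cdots\Gamma_n$. By the very definition of the symbol $U\uplus V=V(F)$ given just before Eq.~(\mref{eq:comni}), such a decomposition is exactly the data of an $n$-tuple of ordinary disjoint unions $U_i\sqcup V_i=V(\Gamma_i)$, together with $U=U_1\cup\cdots\cup U_n$ and $V=V_1\cup\cdots\cup V_n$. This produces a canonical bijection between the index sets of the two sums; the only remaining task is to identify the summands. For this I would verify the key identity
$$F[U]=\Gamma_1[U_1]\cdots\Gamma_n[U_n]\qquad\text{(and similarly for }V\text{)},$$
which holds by the convention that when $F$ is a noncommutative word in $M(\calg)$ the symbol $F[U]$ denotes the concatenation of the induced subgraphs of each factor $\Gamma_i$ on the part $U_i$ of $U$ lying in $V(\Gamma_i)$; in particular, empty $U_i$ contribute the unit $\etree$ and thus disappear under concatenation. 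Combining these two observations, the two sums agree term by term.

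The main obstacle is essentially notational rather than mathematical: one must be careful that $V(F)$ is treated as a \emph{multiset} (so that repeated vertices appearing in different factors $\Gamma_i$ are tracked separately, as illustrated by the examples preceding the lemma), and that the decomposition symbol $\uplus$ respects the factorization $F=\Gamma_1\cdots\Gamma_n$. Once this bookkeeping is set up correctly, the proof is a direct computation, and indeed the two explicit examples immediately preceding the lemma already exhibit the bijection between the two expansions.
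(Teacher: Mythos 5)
Your proposal is correct and follows essentially the same route as the paper's proof: both unfold $\col$ via Eq.~(\mref{eq:comni}), use the definition of $\uplus$ to identify the index set with $n$-tuples of decompositions $U_i\sqcup V_i=V(\Gamma_i)$, invoke the identity $(\Gamma_1\cdots\Gamma_n)[U_1\cup\cdots\cup U_n]=\Gamma_1[U_1]\cdots\Gamma_n[U_n]$, and then distribute the product of sums in $H(\calg)\ot H(\calg)$. The only difference is cosmetic (you work from the right-hand side toward the left, and you make the multiset bookkeeping explicit, which the paper leaves implicit).
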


\begin{proof}
We have
\begin{align*}
\col(\Gamma_1\cdots\Gamma_n)&=\,\sum_{U\uplus V=V(\Gamma_1\cdots\Gamma_n)}(\Gamma_1\cdots\Gamma_n)[U]\ot(\Gamma_1\cdots\Gamma_n)[V]\quad(\text{by Eq}.~(\mref{eq:comni}))\\
&=\sum_{\mbox{\tiny$\begin{array}{c}U_i\sqcup V_i=V(\Gamma_i)\\i=1,\cdots,n\\\end{array}$}}(\Gamma_1\cdots\Gamma_n)[U_1\cup\cdots\cup U_n]\ot(\Gamma_1\cdots\Gamma_n)[V_1\cup\cdots\cup V_n]\\
&=\sum_{\mbox{\tiny$\begin{array}{c}U_i\sqcup V_i=V(\Gamma_i)\\i=1,\cdots,n\\\end{array}$}}\Gamma_1[U_1]\cdots\Gamma_n[U_n]\ot\Gamma_1[V_1]\cdots\Gamma_n[V_n]\\
&=\left(\sum_{U_1\sqcup V_1=V(\Gamma_1)}\Gamma_1[U_1]\ot\Gamma_1[V_1]\right)\cdots\left(\sum_{U_n\sqcup V_n=V(\Gamma_n)}\Gamma_n[U_n]\ot\Gamma_n[V_n]\right)\\
&=\col(\Gamma_1)\cdots\col(\Gamma_n)\quad(\text{by Eq}.~(\mref{eq:comni})).
\end{align*}
This completes the proof.
\end{proof}

So we conclude

\begin{prop}
The $\col$ given in Eq.~(\mref{eq:comni}) coincides with the $\col$ given in Subsection~\mref{subs:sub2}.
\end{prop}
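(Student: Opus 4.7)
The plan is to verify that the combinatorial formula in Eq.~(\ref{eq:comni}) satisfies the three defining clauses~(\ref{eq:eqcobi0}), (\ref{eq:eqcobi}) and (\ref{eq:eqcobi2}) of the recursive coproduct from Subsection~\ref{subs:sub2}. Since both maps are $\bfk$-linear and agree on the basis $M(\calg)$ of $H(\calg)$, this will force them to coincide. So the task reduces to matching the two definitions on each forest $F \in M(\calg)$, stratified by breadth.

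First I would handle the two base cases. If $\bre(F)=0$, then $F=\etree$, so $V(F)=\varnothing$ and the only decomposition $U \uplus V = \varnothing$ is $U=V=\varnothing$, giving $F[\varnothing]\ot F[\varnothing]=\etree\ot\etree$, which agrees with Eq.~(\ref{eq:eqcobi0}). If $\bre(F)=1$, then $F=\Gamma\in\calg$, the product $U\uplus V$ collapses to a single disjoint union $U\sqcup V = V(\Gamma)$, and the combinatorial formula reproduces Eq.~(\ref{eq:eqcobi}) on the nose.

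The remaining case $\bre(F)\geq 2$ is precisely the content of Lemma~\ref{le:lecomo}, which states $\col(\Gamma_1\cdots\Gamma_n) = \col(\Gamma_1)\cdots\col(\Gamma_n)$ when $\col$ is defined combinatorially. Invoking that lemma and combining with the base case $\bre(\Gamma_i)=1$ already treated above, the combinatorial $\col$ satisfies Eq.~(\ref{eq:eqcobi2}) as well. Hence the combinatorial definition matches the recursive definition on every element of $M(\calg)$, and by linearity on all of $H(\calg)$.

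I do not anticipate a serious obstacle; the statement is a consistency check rather than a new theorem, and the nontrivial multiplicativity step has already been disposed of in Lemma~\ref{le:lecomo}. The only subtlety worth being explicit about is that $V(F)$ for $F=\Gamma_1\cdots\Gamma_n$ is treated as a \emph{multiset}, so that the symbol $U\uplus V$ decomposes copy by copy (i.e.\ each $V(\Gamma_i)$ individually into $U_i\sqcup V_i$) rather than as a naive set-theoretic union; this is what makes the factorization in Lemma~\ref{le:lecomo} work and what makes the base case $n=1$ compatible with the multi-factor case.
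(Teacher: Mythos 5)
Your proposal is correct and follows essentially the same route as the paper: both reduce the claim to checking that the combinatorial formula satisfies the three defining clauses, observe that the breadth $0$ and $1$ cases are immediate, and delegate the multiplicativity case to Lemma~\mref{le:lecomo}. Your explicit remark that $V(F)$ must be read as a multiset decomposed factor by factor is a useful clarification the paper leaves implicit.
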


\begin{remark}
The coproduct $\col$ in Eq.~(\mref{eq:comni}) is cocommutative.
\end{remark}

\subsection{A coalgebraic structure on subgraphs of a given graph} \mlabel{ssec:coalg}
In this subsection, we obtain a coalgebra structure on subgraphs of a given graph $G$.
Firstly, we define a linear map $\epl: H(\calg)\rightarrow\bfk$ by taking
\begin{equation}
\epl(F) :=
\left\{
\begin{array}{ll}
1_\bfk, & \text{ if } F = \etree, \\
0, & \text{ if } F \neq \etree.\\
\end{array}
\right.
 \mlabel{eq:biacou}
\end{equation}

\begin{lemma}
Let $G$ be a graph. Then the triple $(H(\calg),\,\col,\,\epl)$ is a coalgebra.
\mlabel{le:lecounicoal}
\end{lemma}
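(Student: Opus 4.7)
My plan is to exploit the multiplicativity of $\col$ established in Lemma~\mref{le:lecomo} to reduce both coalgebra axioms to a check on the algebra generators $\calg$. Together with $\col(\etree)=\etree\ot\etree$ from Eq.~(\mref{eq:eqcobi0}), Lemma~\mref{le:lecomo} says precisely that $\col\colon H(\calg)\to H(\calg)\ot H(\calg)$ is a morphism of unital $\bfk$-algebras, where the target carries the tensor-product algebra structure of Eq.~(\mref{eq:dota}). A direct check using the definition~(\mref{eq:biacou}) shows that $\epl\colon H(\calg)\to\bfk$ is likewise an algebra morphism. Since $H(\calg)$ is the free algebra on the set $\calg$, algebra morphisms out of $H(\calg)$ are determined by their restrictions to $\calg$. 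Because each of the compositions $(\col\ot\id)\circ\col$, $(\id\ot\col)\circ\col$, $(\epl\ot\id)\circ\col$, $(\id\ot\epl)\circ\col$, $\beta_l$, and $\beta_r$ is an algebra morphism, coassociativity and the two counit identities each reduce to agreement on a single generator $\Gamma\in\calg$.

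For coassociativity on $\Gamma\in\calg$, I would iterate Eq.~(\mref{eq:comni}). Starting from $\col(\Gamma)=\sum_{U\sqcup V=V(\Gamma)}\Gamma[U]\ot\Gamma[V]$, a second application of $\col$ to the factor $\Gamma[U]$ uses Lemma~\mref{le:lecomo} to handle the fact that $\Gamma[U]$ generally factors in $M(\calg)$ as a product of its connected components. Combined with the transitivity of induced subgraphs, namely $\Gamma[U][W]=\Gamma[W]$ whenever $W\subseteq U\subseteq V(\Gamma)$, the iterated coproduct collapses to
\[
(\col\ot\id)\col(\Gamma) \;=\; \sum_{U_1\sqcup U_2\sqcup U_3 = V(\Gamma)} \Gamma[U_1]\ot\Gamma[U_2]\ot\Gamma[U_3],
\]
and the symmetric computation yields precisely the same triple-partition sum for $(\id\ot\col)\col(\Gamma)$.

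For the counit, Eq.~(\mref{eq:comni}) gives $(\epl\ot\id)\col(\Gamma)=\sum_{U\sqcup V=V(\Gamma)}\epl(\Gamma[U])\ot\Gamma[V]$; by~(\mref{eq:biacou}), $\epl(\Gamma[U])$ equals $1_\bfk$ exactly when $\Gamma[U]=\etree$, forcing $U=\varnothing$, and the unique surviving summand is $1_\bfk\ot\Gamma=\beta_l(\Gamma)$. The identity $(\id\ot\epl)\circ\col=\beta_r$ follows symmetrically. The principal obstacle is purely bookkeeping: the induced subgraph $\Gamma[U]$ rarely lies in $\calg$ and instead factors in $M(\calg)$, so before applying transitivity one must first unpack $\col(\Gamma[U])$ via Lemma~\mref{le:lecomo}. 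Once this unpacking is made explicit and both iterated sums are reindexed by ordered three-fold partitions of $V(\Gamma)$, the equality of the two sides is immediate.
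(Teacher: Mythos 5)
Your proof is correct, but it takes a genuinely different route from the paper's. The paper verifies coassociativity and counicity directly on an arbitrary basis element $F\in M(\calg)$, manipulating the sum of Eq.~(\mref{eq:comni}) over multiset decompositions $U\uplus V=V(F)$ and reindexing it as a sum over three-fold decompositions; no appeal to freeness is made. You instead note that $\col$ and $\epl$ are unital algebra morphisms (the former by Eq.~(\mref{eq:eqcobi0}) together with Lemma~\mref{le:lecomo}, the latter by the same computation as in the proof of Theorem~\mref{th:bialg}), so that every map occurring in the coalgebra axioms is an algebra morphism out of the free algebra $H(\calg)$, and the identities need only be checked on a connected generator $\Gamma\in\calg$. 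This buys a real simplification of the bookkeeping: on a connected $\Gamma$ the decomposition $\uplus$ is an ordinary disjoint union of subsets of $V(\Gamma)$, and the only residual subtlety --- that $\Gamma[U]$ may be disconnected, so $\col(\Gamma[U])$ must be unpacked via Lemma~\mref{le:lecomo} before invoking transitivity $\Gamma[U][W]=\Gamma[W]$ --- is exactly the one you flag and handle. Two minor points: the algebra structure on $H(\calg)\ot H(\calg)$ that makes $\col$ multiplicative is the componentwise one $(a\ot b)(c\ot d)=ac\ot bd$, not the bimodule structure of Eq.~(\mref{eq:dota}) that you cite; and your reduction rests on the statement at the end of Subsection~\mref{subs:sub1} that $H(\calg)$ is the free algebra on $\calg$, which should be cited explicitly. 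Neither affects correctness.
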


\begin{proof}
We first show the coassociativity
\begin{equation*}
(\id\otimes \col)\col(F)=(\col\otimes \id)\col(F)\,\text{ for } F\in H(\calg).
%\mlabel{eq:coass}
\end{equation*}
By linearity, it suffices to consider basis elements $F\in M(\calg)$.
We have
\begin{align*}
(\id\otimes \col)\col(F)=&\,(\id\ot\col)\left(\sum_{U\uplus V=V(F)}F[U]\ot F[V]\right)\quad(\text{by Eq}.~(\mref{eq:comni}))\\
=&\,\sum_{U\uplus V=V(F)}F[U]\ot\col(F[V])\\
=&\,\sum_{U\uplus V=V(F)}F[U]\ot \left(\sum_{V'\uplus V''=V(F[V])}(F[V])[V']\ot (F[V])[V'']\right)\quad(\text{by Eq}.~(\mref{eq:comni}))\\
=&\,\sum_{U\uplus V=V(F)}F[U]\ot \left(\sum_{V'\uplus V''=V(F[V])}F[V']\ot F[V'']\right)\\
=&\,\sum_{U\uplus V'\uplus V''=V(F)}F[U]\ot F[V']\ot F[V'']\quad(\text{by } V(F[V])=V)\\
=&\,\sum_{U\uplus V'\uplus V''=V(F)}(F[U\uplus V'])[U]\ot (F[U\uplus V'])[V']\ot F[V'']\\
=&\,\sum_{W\uplus V''=V(F)}\left(\sum_{U\uplus V'=W}F[W][U]\ot F[W][V']\right)\ot F[V'']\\
=&\,\sum_{W\uplus V''=V(F)}\left(\sum_{U\uplus V'=V(F[W])}F[W][U]\ot F[W][V']\right)\ot F[V'']\quad(\text{by } V(F[W])=W)\\
=&\,\sum_{W\uplus V''=V(F)}\col(F[W])\ot F[V'']\quad(\text{by }U\uplus V'=W)\\
=&\,(\col\ot\id)\left(\sum_{W\uplus V''=V(F)}F[W]\ot F[V'']\right)\\
=&\,(\col\ot\id)\col(F).
\end{align*}

Thus $\col$ is coassociative.
Next we prove the counicity of $\epl$. For  $ F\in M(\calg)$,
\begin{align*}
(\epl\ot \id)\col(F)=&\ (\epl\ot\id)\left(\sum_{U\uplus V=V(F)}F[U]\ot F[V]\right)\quad(\text{by Eq.~}(\mref{eq:comni}))\\
=&\ (\epl\ot\id)\left(\etree\ot F+F\ot \etree+\sum_{\mbox{\tiny$\begin{array}{c}U\uplus V=V(F)\\ U,V\neq V(F)\\\end{array}$}}F[U]\ot F[V]\right)\\
%=&1\ot F+F\ot 1+\sum_{(F)} F_{(1)}\ot F_{(2)}\\
=&\ \epl(\etree)\ot F+\epl(F)\ot \etree+\sum_{\mbox{\tiny$\begin{array}{c}U\uplus V=V(F)\\ U,V\neq V(F)\\\end{array}$}} \epl(F[U])\ot F[V]\\
=& \ 1_\bfk\ot F = \ \beta_{l}(F)\quad(\text{by Eq.~}(\mref{eq:biacou})).
\end{align*}
With the same argument, we can show  $(\id\ot\epl)\col=\beta_r$. Here $\beta_l$ and $\beta_r$
are the isomorphisms defined in Definition~\mref{de:decoa}.
This completes the proof.
\end{proof}

\begin{coro}
Let $G$ be a graph and $G'$ be a subgraph of graph $G$. Then the triple $(H(\calg'),\,\col,\,\epl)$ is a subcoalgebra of the coalgebra $(H(\calg),\,\col,\,\epl)$.
\mlabel{co:coca}
\end{coro}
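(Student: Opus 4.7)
The plan is to verify the two defining properties of a subcoalgebra: that $H(\calg')$ is a $\bfk$-submodule of $H(\calg)$, and that the coproduct $\col$ and counit $\epl$ restrict to maps on $H(\calg')$ satisfying the coalgebra axioms. First, I would observe that because $G'$ is a subgraph of $G$, every subgraph of $G'$ is also a subgraph of $G$, so in particular every nonempty connected subgraph of $G'$ is a nonempty connected subgraph of $G$; hence $\calg' \subseteq \calg$. This inclusion lifts to $M(\calg') \subseteq M(\calg)$ and further to $H(\calg') \subseteq H(\calg)$ as $\bfk$-submodules, settling the submodule requirement.

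The key step will be to establish the closure relation $\col(H(\calg')) \subseteq H(\calg') \otimes H(\calg')$. By $\bfk$-linearity it suffices to treat basis elements $F = \Gamma_1 \cdots \Gamma_n \in M(\calg')$ with each $\Gamma_i \in \calg'$, and by Lemma~\mref{le:lecomo}, which expresses $\col(F)$ as the product $\col(\Gamma_1)\cdots\col(\Gamma_n)$, it suffices in turn to handle a single connected subgraph $\Gamma \in \calg'$. In that case Eq.~(\mref{eq:eqcobi}) gives
\[
\col(\Gamma) = \sum_{V_1 \sqcup V_2 = V(\Gamma)} \Gamma[V_1] \otimes \Gamma[V_2],
\]
and the crucial observation is that each induced subgraph $\Gamma[V_i]$ is itself a subgraph of $G'$, so its connected components are connected subgraphs of $G'$, hence elements of $\calg'$. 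Interpreting $\Gamma[V_i]$ as the concatenation in $M(\calg)$ of its connected components, each tensor factor in fact lies in $M(\calg')$, and therefore $\col(\Gamma) \in H(\calg') \otimes H(\calg')$.

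Closure under $\epl$ is immediate from Eq.~(\mref{eq:biacou}), since $\etree \in M(\calg')$ and $\epl$ simply picks out the coefficient of $\etree$. The coassociativity and counicity axioms for the restricted maps then hold automatically, since they already hold in the ambient coalgebra $(H(\calg), \col, \epl)$ by Lemma~\mref{le:lecounicoal}, and both sides of each diagram already lie in tensor powers of the submodule $H(\calg')$. The only genuinely nontrivial point --- and the one I would flag as the main obstacle --- is the closure step for $\col$, which ultimately rests on the elementary graph-theoretic fact that an induced subgraph of a subgraph of $G'$ is again a subgraph of $G'$, so that after decomposition into connected components one remains inside $\calg'$.
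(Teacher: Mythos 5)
Your proof is correct and follows essentially the same route as the paper, whose own proof is just the one-line observation that the claim follows from Lemma~\mref{le:lecounicoal} together with Eqs.~(\mref{eq:comni}) and~(\mref{eq:biacou}); you have simply made explicit the inclusion $\calg'\subseteq\calg$, the closure $\col(H(\calg'))\subseteq H(\calg')\otimes H(\calg')$ via the fact that induced subgraphs of a connected subgraph of $G'$ decompose into connected components lying in $\calg'$, and the inheritance of the coalgebra axioms. Your identification of the closure step for $\col$ as the only nontrivial point is exactly what the paper's citation of Eq.~(\mref{eq:comni}) is implicitly relying on.
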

\begin{proof}
It follows from the Lemma~\mref{le:lecounicoal} and Eqs.~(\mref{eq:comni}) and~(\mref{eq:biacou}).
\end{proof}

\begin{coro}
Let $G$ be a graph. Then $(H(\bullet),\,\col,\,\epl)$ is a simple subcoalgebra, for any $\bullet\in V(G)$.
\mlabel{co:cosca}
\end{coro}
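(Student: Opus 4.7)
The proof plan has two parts, the first a direct appeal to the preceding corollary and the second requiring a focused structural analysis.

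For the first part, I would view $\bullet\in V(G)$ as the single-vertex subgraph of $G$ (with vertex set $\{\bullet\}$ and empty edge set). Applying Corollary~\ref{co:coca} to this subgraph yields immediately that $(H(\bullet),\col,\epl)$ is a sub-coalgebra of $(H(\calg),\col,\epl)$. This step is essentially a restatement of the previous corollary.

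For the second part, I would establish simplicity by exploiting the minimal structure of $H(\bullet)$. The single-vertex subgraph has exactly one nonempty connected subgraph, namely $\bullet$ itself, so $\calg$ associated to the single-vertex subgraph is the singleton $\{\bullet\}$. Consequently, the coproduct is strongly constrained: by Eq.~(\ref{eq:eqcobi}) one has $\col(\bullet)=\bullet\ot\etree+\etree\ot\bullet$, and powers of $\bullet$ inherit a rigid binomial-type form via Eq.~(\ref{eq:eqcobi2}). I would then argue that any non-zero sub-coalgebra $D\subseteq H(\bullet)$ coincides with $H(\bullet)$ as follows: for $0\neq x\in D$ expand $\col(x)$ using the combinatorial description in Eq.~(\ref{eq:comni}), apply $\epl\ot\id$ and $\id\ot\epl$ via Eq.~(\ref{eq:biacou}) to extract specific basis elements forced to lie in $D$, and iterate to exhaust $H(\bullet)$.

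The main obstacle I anticipate is the simplicity verification, specifically ruling out all potential proper nonzero sub-coalgebras and reconciling the role of the unit $\etree$ with the primitive generator $\bullet$. The argument hinges on showing that the combinatorial coproduct of Eq.~(\ref{eq:comni}) leaves essentially no room for a nontrivial invariant subspace inside $H(\bullet)$, given the singleton nature of $\calg$ at a single vertex.
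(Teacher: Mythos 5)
Your first part is fine and is exactly what the paper does: view $\bullet$ as a one-vertex subgraph of $G$ and invoke Corollary~\ref{co:coca} to get that $H(\bullet)$ is a subcoalgebra. (The paper's own proof is only the one-line ``follows from Corollary~\ref{co:coca} and Eq.~(\ref{eq:comni})'' and offers no more than this.)

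The second part, however, contains a genuine gap, and in fact no argument along your lines can work, because the exhaustion claim is false. With the natural reading $H(\bullet)=\bfk M(\{\bullet\})=\bfk\langle\bullet\rangle$, this coalgebra is infinite-dimensional with basis $\{\etree,\bullet,\bullet\bullet,\dots\}$ and binomial-type coproduct, and it has many proper nonzero subcoalgebras: $\bfk\etree$ is one (since $\col(\etree)=\etree\ot\etree$ and $\epl(\etree)=1_\bfk$, the span of the grouplike $\etree$ is closed under $\col$), and so is each truncation $\bfk\{\etree,\bullet,\dots,\bullet^{n}\}$, because $\col$ sends $\bullet^{n}$ into sums of $\bullet^{k}\ot\bullet^{n-k}$ with $k\le n$. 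Concretely, your extraction step does not do what you hope: applying $\epl\ot\id$ to $\col(x)$ merely returns $x$ by the counit axiom, and even applying arbitrary functionals $f\ot\id$ with $f\in H(\bullet)^{*}$ only lands you in the span of $\etree,\dots,\bullet^{n}$ where $n$ is the top degree of $x$ --- you can never reach $\bullet^{n+1}$, so the iteration cannot exhaust $H(\bullet)$. (This is an instance of the general fact that every element of a coalgebra lies in a finite-dimensional subcoalgebra, so an infinite-dimensional coalgebra is never simple.) The statement can only be salvaged by reading $H(\bullet)$ as something other than $\bfk\langle\bullet\rangle$ --- e.g.\ as the one-dimensional span of the grouplike $\etree$, which is the unique simple subcoalgebra here --- and the paper's terse proof does not address this; your obstacle paragraph correctly senses that ``reconciling the role of the unit $\etree$'' is where the trouble lies, but the resolution is that $\bfk\etree$ itself is a proper nonzero subcoalgebra, which refutes simplicity rather than confirming it.
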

\begin{proof}
It follows directly from the Corollary~\mref{co:coca} and  Eq.~(\mref{eq:comni}).
\end{proof}

\subsection{A bialgebraic structure on subgraphs of a given graph}
Combing the algebraic structure  obtained in Subsection~\mref{subs:sub1} and the coalgebraic structure obtained in Subsection~\mref{ssec:coalg},
we build a bialgebraic structure on top of subgraphs of a given graph. Let us review the concept of bialgebras.

\begin{defn}
\begin{enumerate}
\item\cite[p. 49]{Gub}
Let $(H,\,m_H,\,u_H)$ and $(L,\,m_L,\,u_L)$ be two algebras. A map $f:H\rightarrow L$ is called an {\bf algebra morphism} if
$$m_L\circ (f(x)\ot f(y))=f\circ m_H(x\ot y)\,\,\,\text{ and }\,\,\,u_L(k)=f\circ u_H(k)\,\,\text{for all}\,\,x,y\in H,k\in\bfk.$$
\item\cite[p. 51]{Gub}
A {\bf bialgebra} is a quintuple $(H,\,m,\,u,\,\Delta,\,\varepsilon)$, where $(H,\,m,\,u)$ is an algebra and $(H,\,\Delta,\,\varepsilon)$ is a coalgebra such that $\Delta: H\rightarrow H\ot H$ and $\varepsilon: H\rightarrow \bfk$ are morphisms of algebras.
\end{enumerate}
\end{defn}

Now, we arrive at our main result in this subsection.

\begin{theorem}
Let $G$ be a graph. Then the quintuple $(H(\calg),\,\mul,\,\ul,\,\col,\,\epl)$ is a bialgebra.
\mlabel{th:bialg}
\end{theorem}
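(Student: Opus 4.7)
The plan is that, having established the algebra structure in Subsection~\mref{subs:sub1} and the coalgebra structure in Lemma~\mref{le:lecounicoal}, the only remaining requirement is that the coproduct $\col$ and the counit $\epl$ be morphisms of $\bfk$-algebras from $(H(\calg),\mul,\ul)$ to $(H(\calg)\ot H(\calg),\,m_\ot,\,\ul\ot\ul)$ and to $\bfk$ respectively, where $m_\ot$ denotes the standard tensor product algebra multiplication $(a\ot b)(c\ot d)=ac\ot bd$.

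First I would verify that $\col$ is multiplicative. By $\bfk$-bilinearity it suffices to check this on basis elements $F,F'\in M(\calg)$. Writing $F=\Gamma_1\cdots\Gamma_m$ and $F'=\Gamma'_1\cdots\Gamma'_n$ with $\Gamma_i,\Gamma'_j\in\calg$, the concatenation $FF'=\Gamma_1\cdots\Gamma_m\Gamma'_1\cdots\Gamma'_n$ is again an element of $M(\calg)$ of breadth $m+n$, so applying Eq.~(\mref{eq:eqcobi2}) on each side yields
\[
\col(FF')=\col(\Gamma_1)\cdots\col(\Gamma_m)\col(\Gamma'_1)\cdots\col(\Gamma'_n)=\col(F)\col(F'),
\]
where the products on the right are taken in the tensor product algebra. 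The degenerate case in which $F$ or $F'$ equals $\etree$ is immediate from Eq.~(\mref{eq:eqcobi0}), which also delivers unitality $\col(\ul)=\col(\etree)=\etree\ot\etree=\ul\ot\ul$.

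Next I would check that $\epl$ is an algebra morphism. Unitality $\epl(\ul)=\epl(\etree)=1_\bfk$ is the first clause of Eq.~(\mref{eq:biacou}). For multiplicativity on basis elements $F,F'\in M(\calg)$, observe that $\bre$ is additive under concatenation, so $FF'=\etree$ if and only if $F=F'=\etree$. When both factors are $\etree$, both $\epl(FF')$ and $\epl(F)\epl(F')$ equal $1_\bfk$; otherwise $\epl(FF')=0$ and at least one of $\epl(F),\epl(F')$ vanishes, so the right-hand side is also $0$. Combining these checks with Lemma~\mref{le:lecounicoal} gives all the bialgebra axioms.

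The main obstacle is conceptual rather than computational: one must interpret the multiplicativity in Eq.~(\mref{eq:eqcobi2}) with respect to the standard tensor product algebra structure on $H(\calg)\ot H(\calg)$, and in particular not confuse it with the $A$-bimodule structure recorded in Eq.~(\mref{eq:dota}), which is unrelated here. Once this distinction is kept straight, the verification is essentially a matter of unwinding the definitions, since $\col$ was constructed multiplicatively on $H(\calg)$ to begin with and $\epl$ annihilates everything of positive breadth.
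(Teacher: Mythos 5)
Your proposal is correct and follows essentially the same route as the paper: cite the algebra structure from Subsection~\mref{subs:sub1} and the coalgebra structure from Lemma~\mref{le:lecounicoal}, observe that $\col$ is multiplicative by Eq.~(\mref{eq:eqcobi2}) (and unital by Eq.~(\mref{eq:eqcobi0})), and verify that $\epl$ is an algebra morphism by the same case analysis on whether both factors are $\etree$. Your treatment is in fact slightly more complete than the paper's, which does not spell out the unitality of $\col$ or the breadth-additivity argument, but the substance is identical.
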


\begin{proof}
In Subsection~\mref{subs:sub1}, we know that the triple $(H(\calg), \,\mul,\ul)$ is an algebra. The triple $(H(\calg),\, \col,\,\epl )$ is a coalgebra by Lemma~\mref{le:lecounicoal}. Further, the coproduct $\col$ is an algebra morphism by Eq.~(\mref{eq:eqcobi2}).
We are left to prove that $\epl$ is an algebra morphism.
For this, let $F_{1}, F_2\in M(\calg)$. Then by Eq.~(\mref{eq:biacou})
\begin{equation*}
\epl(F_1F_2) =
\left\{
\begin{array}{ll}
1_\bfk, & \text{ if } F_1 \text{ and } F_{2} \text{ are  empty graphs},\\
0, &  \text{ others }, \\
\end{array}
\right .
\end{equation*}
and
\begin{equation*}
\epl(F_1)\epl(F_2) =
\left\{
\begin{array}{ll}
1_\bfk, & \text{ if } F_1 \text{ and } F_{2} \text{ are  empty graphs},\\
0, & \text{ others}. \\
\end{array}
\right .
\end{equation*}
Thus, $\epl(F_1F_2)=\epl(F_1)\epl(F_2)$ and $\epl$ is an algebra morphism. This completes the proof.
\end{proof}

\section{A Hopf algebra on subgraphs of a given graph}
\mlabel{sec:hopfalg}
In this section, we construct a Hopf algebraic structure on subgraphs of a given graph $G$.
\subsection{A Hopf algebraic structure on subgraphs of a graph}
Let us review some concepts needed later.

\begin{defn}\cite[p.~61]{Abe}
Let  $(C,\,\Delta,\,\varepsilon)$ be a coalgebra and $(A,\,m,\,u)$ be an algebra. For $f,g\in \Hom(C,A)$,
$$f\ast g:=m\circ(f\ot g)\circ \Delta$$
is said to be the  {\bf convolution} of $f$ and $g$.
\end{defn}

\begin{defn}\cite[p.~61]{Abe}
Let $(H,\,m,\,u,\,\Delta,\,\varepsilon)$ be a bialgebra. Then the triple $(\Hom(H,H),\,\ast,\,u\circ\varepsilon)$ is a $\bfk$-algebra. A linear endomorphism $S$ of $H$ is called an {\bf antipode} for $H$ if it is the inverse of $\id_H$ under the convolution product:
$$S\ast \id_H=\id_H\ast S=u\circ\varepsilon.$$
A bialgebra with an antipode is called a {\bf Hopf algebra}.
\end{defn}

\begin{defn}\cite[Definition~2.3.1]{Gub}
A bialgebra $(H,\,m,\,u,\,\Delta,\,\varepsilon)$ is called a {\bf graded bialgebra} if there are $\bfk$-submodules $H^{(n)}, n\geq 0$, of $H$ such that
\begin{enumerate}
\item
$H=\bigoplus\limits^{\infty}_{n=0}H^{(n)}$;
\vspace{0.2cm}
\item
$H^{(p)}H^{(q)}\subseteq H^{(p+q)},\,\, p, q\geq 0;$
\vspace{0.2cm}
\item
$\Delta(H^{(n)})\subseteq\bigoplus\limits_{p+q=n}H^{(p)}\ot H^{(q)},\,\, n\geq 0,\,\,0\leq p,q\leq n.$
\end{enumerate}
Elements of $H^{(n)}$ are said to have degree $n$. The $H$ is called {\bf connected} if $H^{(0)}=\bfk$ and $\ker \varepsilon=\bigoplus\limits_{n\geq 1}H^{(n)}$.
\end{defn}
\begin{lemma}\cite{CK, Fo3, Gub}
Any connected graded bialgebra $H$ is a Hopf algebra. The antipode $S$ is given by:
$$S(x)=\sum_{k\geq 0}(e-\id_{H})^{\ast k}(x).$$
It is also defined by $S(1_{H})=1_H$ and recursively by any of the two formulas:
\begin{align*}
&S(x)=-x-\sum_{(x)}S(x')x'',\\
&S(x)=-x-\sum_{(x)}x'S(x'')\,\,\text{ for }\,\,x\in\ker \varepsilon.
\end{align*}
Here $x'$ and $x''$ are from $$\Delta(x)=x\ot 1+1\ot x+\sum_{(x)}x'\ot x''.$$
\mlabel{le:conhop}
\end{lemma}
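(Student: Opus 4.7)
The plan is to construct the antipode explicitly using the convolution-algebra structure on $\mathrm{End}_\bfk(H)$ and to exploit the connected graded hypothesis to guarantee finiteness. Recall that $(\mathrm{End}_\bfk(H),\ast, e)$ with $e:=u\circ\varepsilon$ is a $\bfk$-algebra, and finding an antipode for $H$ is the same as producing a two-sided $\ast$-inverse of $\id_H$. Writing $\id_H=e-(e-\id_H)$ and inverting the formal geometric series yields the candidate
$$S\;=\;\sum_{k\geq 0}(e-\id_H)^{\ast k},$$
so the real content is to show that this sum is well-defined on each element of $H$ and that it really realizes a convolution inverse of $\id_H$.

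The crucial input is the connected graded structure. Because $H^{(0)}=\bfk$, the counit axioms applied to a homogeneous $x\in H^{(n)}$ with $n\geq 1$ force the coproduct into the reduced form
$$\Delta(x)=x\ot 1+1\ot x+\sum_{(x)}x'\ot x'',$$
with each $x',x''\in\ker\varepsilon$ and $\deg x'+\deg x''=n$, hence in particular $\deg x',\deg x''<n$. Noting that $(\id_H-e)(1_H)=0$ while $(\id_H-e)$ acts as the identity on $\ker\varepsilon$, an inductive unwrapping of the iterated coproduct shows
$$(\id_H-e)^{\ast k}(x)\;\in\;\sum_{\substack{n_1+\cdots+n_k=n\\ n_i\geq 1}}H^{(n_1)}\cdots H^{(n_k)}.$$
This index set is empty as soon as $k>n$, so for each $x$ only finitely many terms in the series defining $S(x)$ are nonzero and $S$ is a well-defined $\bfk$-linear endomorphism.

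With $S$ in hand, the identities $S\ast\id_H=\id_H\ast S=e$ follow from telescoping the geometric series inside the convolution algebra, exactly as $(1-t)\sum_{k\geq 0}t^k=1$ in the scalar case; the local nilpotency on each $x$ makes this manipulation rigorous. The two recursive formulas are then immediate: evaluating $S\ast \id_H=e$ on a homogeneous $x\in\ker\varepsilon$ and using $S(1_H)=1_H$ together with the reduced form of $\Delta(x)$ above gives $S(x)+x+\sum_{(x)}S(x')x''=0$, which is the first recursion, and the second is symmetric via $\id_H\ast S=e$. I expect the finiteness/local nilpotency step to be the only genuinely nontrivial point; once it is secured, the verification of the antipode axioms and the two recursions reduces to routine bookkeeping inside $(\mathrm{End}_\bfk(H),\ast,e)$.
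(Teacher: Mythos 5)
The paper states this lemma as a cited classical result without reproducing a proof, and your argument is precisely the standard one from the cited sources (Connes--Kreimer, Foissy, Guo): local nilpotency of $\id_H-e$ on each graded piece via connectedness, termination of the geometric series, telescoping to get $S\ast\id_H=\id_H\ast S=e$, and extraction of the recursions from the reduced coproduct. Your proposal is correct and takes essentially the same route the paper relies on.
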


We proceed to prove that $H(\calg)$ is a connected graded bialgebra. For this, denoted by
$$H(\calg)^{(n)}:=\bfk\left\{F\in  M(\calg)\medmid \left|V(F)\right|= n\right\}\,\, \text{ for } n\geq 0,$$
where $V(F)$ is the multiset of vertices of $F$. Now we arrive at one of our main results in this section.
\begin{theorem}
Let $G$ be a graph. Then
\begin{enumerate}
\item
The quintuple $(H(\calg),\,\mul,\,\ul,\,\col,\,\epl)$ is a connected graded bialgebra.\mlabel{it:ita}
\item
The quintuple $(H(\calg),\,\mul,\,\ul,\,\col,\,\epl)$ is a Hopf algebra with antipode $S=\sum\limits_{k\geq 0}(e-\id_{H})^{\ast k}$.\mlabel{it:itb}
\end{enumerate}
\mlabel{th:thcg}
\end{theorem}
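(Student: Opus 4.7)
The plan is to verify that the bialgebra $(H(\calg),\mul,\ul,\col,\epl)$ of Theorem~\mref{th:bialg} carries a grading by total vertex count, show that this grading is connected, and then invoke Lemma~\mref{le:conhop} to conclude that an antipode exists with the stated formula.

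First I would establish the direct sum decomposition $H(\calg) = \bigoplus_{n\geq 0} H(\calg)^{(n)}$. Since $M(\calg)$ is the free monoid on $\calg$, each basis element $F = \Gamma_1\cdots\Gamma_n$ has a well-defined multiset of vertices $V(F) = V(\Gamma_1)\cup\cdots\cup V(\Gamma_n)$, and the partition of $M(\calg)$ according to the cardinality of this multiset gives the desired decomposition. For the multiplicative compatibility $H(\calg)^{(p)}\cdot H(\calg)^{(q)} \subseteq H(\calg)^{(p+q)}$, note that concatenating $F_1$ and $F_2$ produces a forest whose vertex multiset is the union, so $|V(F_1F_2)| = |V(F_1)| + |V(F_2)|$.

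For coproduct compatibility, I would use the combinatorial description from Eq.~(\mref{eq:comni}): for $F\in M(\calg)$ with $|V(F)|=n$,
\[
\col(F) = \sum_{U\uplus V = V(F)} F[U]\ot F[V],
\]
and every term in this sum satisfies $|V(F[U])| + |V(F[V])| = |U| + |V| = n$, so $\col(H(\calg)^{(n)}) \subseteq \bigoplus_{p+q=n} H(\calg)^{(p)}\ot H(\calg)^{(q)}$. Connectedness follows because $H(\calg)^{(0)}$ is spanned by the unique basis element with empty vertex multiset, namely $\etree$, giving $H(\calg)^{(0)} = \bfk\etree \cong \bfk$; and the definition of $\epl$ in Eq.~(\mref{eq:biacou}) makes it clear that $\ker\epl = \bigoplus_{n\geq 1} H(\calg)^{(n)}$. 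This proves item~(\mref{it:ita}).

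For item~(\mref{it:itb}), once $(H(\calg),\mul,\ul,\col,\epl)$ is known to be a connected graded bialgebra, Lemma~\mref{le:conhop} applies verbatim to yield the antipode $S = \sum_{k\geq 0}(e-\id_H)^{*k}$, where $e := \ul\circ\epl$; the sum converges on each homogeneous component because $(e-\id_H)$ vanishes on $H(\calg)^{(0)}$ and therefore $(e-\id_H)^{*k}$ vanishes on $H(\calg)^{(n)}$ for $k>n$ (by the coproduct's grading compatibility). The argument is thus mostly bookkeeping; the only place requiring care is ensuring that the combinatorial coproduct indeed preserves total vertex count, which is immediate from Eq.~(\mref{eq:comni}). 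I do not expect any serious obstacle.
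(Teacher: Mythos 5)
Your proposal is correct and follows essentially the same route as the paper: grade $H(\calg)$ by the cardinality of the vertex multiset, verify the three graded-bialgebra axioms and connectedness, and invoke Lemma~\ref{le:conhop} for the antipode. The only differences are cosmetic --- you correctly write $|V(F_1F_2)|=|V(F_1)|+|V(F_2)|$ where the paper's proof has a typo ($pq$ instead of $p+q$), and you spell out the local nilpotency ensuring convergence of $\sum_{k\geq 0}(e-\id_H)^{\ast k}$, which the paper leaves inside the cited lemma.
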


\begin{proof}
\noindent{\bf (a).}
The quintuple $(H(\calg),\,\mul,\,\ul,\,\col,\,\epl)$ is a bialgebra by Theorem~\mref{th:bialg}.
By the definition of $H(\calg)$, we obtain
$$H(\calg)=\bigoplus\limits^{\infty}_{n=0}H(\calg)^{(n)}.$$
 Let $F_1\in H(\calg)^{(p)}$ and $F_2\in H(\calg)^{(q)}$ with $p, q\geq 0$. Then
 $$\left|V(F_1)\right|=p,\,  \left|V(F_2)\right|=q\,\,\,\text{ and }\,\,\,\left|V(F_1F_2)\right|=pq,$$ whence

$$H(\calg)^{(p)}H(\calg)^{(q)}\subseteq H(\calg)^{(p+q)}.$$
By Eq.~(\mref{eq:comni}),
$$\col(H(\calg)^{(n)})\subseteq \bigoplus\limits_{p+q=n}H(\calg)^{(p)}\ot H(\calg)^{(q)}.$$
Thus, the quintuple $(H(\calg),\,\mul,\,\ul,\,\col,\,\epl)$ is a graded bialgebra. Further,
$$H(\calg)^{(0)}=\bfk\{\etree\}=\bfk\,\text{ and }\, \ker\varepsilon=\bigoplus\limits_{n\geq 1}H(\calg)^{(n)}\,\text{ by Eq.~(\mref{eq:biacou})}.$$
Therefore, the quintuple $(H(\calg),\,\mul,\,\ul,\,\col,\,\epl)$ is a connected graded bialgebra.

\noindent{\bf (b).}
It follows from Lemma~\mref{le:conhop} and Item~(\mref{it:ita}).
\end{proof}

\subsection{The dual Hopf algebra}
In this subsection, we consider the dual Hopf algebra of $(H(\calg),\,\mul, \ul,\,\col,\,\epl)$. The following elementary result is
fundamental.

\begin{lemma}~\cite{Fo3}
Let $H=\bigoplus\limits^{\infty}_{n=0}H^{(n)}$ be a graded bialgebra and $H^{(n)}$ be finite dimensional. Then
\begin{enumerate}
\item
The graded dual $H^{\ast}$ is $\bigoplus\limits^{\infty}_{n=0}\left(H^{(n)}\right)^{\ast}$. Note that $H^{\ast}$ is also a graded bialgebra, and
$H^{\ast\ast}\simeq H$.
\item $H\ot H$ is also a graded bialgebra with $(H\ot H)^{(n)}=\sum\limits_{i=0}^n H^{(i)}\ot H^{(n-i)}$ for all $n\in \mathbb{N}$. Moreover, $(H\ot H)^{\ast}\simeq H^\ast\ot H^\ast$.
\end{enumerate}
\mlabel{le:ledu}
\end{lemma}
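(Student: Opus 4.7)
The plan is to prove the two items separately, exploiting the fact that finite-dimensionality in each degree is precisely what makes the graded dual behave nicely with respect to tensor products (whereas the full linear dual does not).

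For item~(a), I would first define the algebra and coalgebra structures on $H^\ast=\bigoplus_{n\geq 0}(H^{(n)})^\ast$ by dualizing. The multiplication $m^\ast$ on $H^\ast$ is the transpose of the coproduct $\Delta$ of $H$: for $f,g\in H^\ast$ and $x\in H$, set $(fg)(x):=(f\ot g)(\Delta x)$, using the grading of $\Delta$ to see that $m^\ast$ restricts to $(H^{(p)})^\ast\ot (H^{(q)})^\ast\to (H^{(p+q)})^\ast$. The unit is $\epl$. Dually, the coproduct $\Delta^\ast$ on $H^\ast$ is the transpose of the multiplication $m$: for $f\in H^\ast$, define $\Delta^\ast(f)\in H^\ast\ot H^\ast$ to be the unique element with $(\Delta^\ast f)(x\ot y)=f(xy)$. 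The crucial point is that this element truly lies in $H^\ast\ot H^\ast$ and not merely in $(H\ot H)^\ast$: because each $H^{(n)}$ is finite-dimensional, picking a homogeneous $f\in (H^{(n)})^\ast$ forces $\Delta^\ast f$ to lie in the finite-dimensional piece $\bigoplus_{p+q=n}(H^{(p)})^\ast\ot (H^{(q)})^\ast$, where the canonical map from $V^\ast\ot W^\ast$ to $(V\ot W)^\ast$ is an isomorphism. The counit on $H^\ast$ is evaluation at $1_H$. All the bialgebra axioms for $H^\ast$ follow formally from those of $H$ by transposing the defining commutative diagrams. Finally, $H^{\ast\ast}\simeq H$ term-by-term because $(H^{(n)})^{\ast\ast}\simeq H^{(n)}$ for each $n$; the evaluation map $x\mapsto(f\mapsto f(x))$ assembles into a graded bialgebra isomorphism.

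For item~(b), I would equip $H\ot H$ with its standard tensor product bialgebra structure: multiplication $(a\ot b)(c\ot d)=ac\ot bd$, unit $1\ot 1$, coproduct $(\Delta\ot\Delta)$ followed by the flip in the middle, and counit $\epl\ot\epl$. Setting $(H\ot H)^{(n)}:=\sum_{i=0}^{n}H^{(i)}\ot H^{(n-i)}$, I would check that the multiplication sends $(H\ot H)^{(p)}\cdot(H\ot H)^{(q)}\subseteq (H\ot H)^{(p+q)}$ and that the coproduct is appropriately graded; these follow directly from the corresponding properties for $H$. Each $(H\ot H)^{(n)}$ is a finite-dimensional sum of tensor products of finite-dimensional spaces, so $\big((H\ot H)^{(n)}\big)^\ast\simeq\bigoplus_{i=0}^{n}(H^{(i)})^\ast\ot(H^{(n-i)})^\ast=(H^\ast\ot H^\ast)^{(n)}$, and assembling over $n$ gives $(H\ot H)^\ast\simeq H^\ast\ot H^\ast$ as graded bialgebras.

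The main obstacle is the subtle point that justifies the whole construction: for a general infinite-dimensional bialgebra, the multiplication on $H$ does not dualize to a coproduct on the full linear dual, because $m^\ast$ lands in $(H\ot H)^\ast$ which is strictly larger than $H^\ast\ot H^\ast$. Everything in the argument above hinges on verifying that the image of $m^\ast$, when restricted to the graded dual, actually lies in $H^\ast\ot H^\ast$, and this is exactly where the finite-dimensionality hypothesis on each $H^{(n)}$ is essential. Once this bookkeeping is done, the rest of the proof is a formal dualization of commutative diagrams.
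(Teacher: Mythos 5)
Your proposal is correct. Note, however, that the paper offers no proof of this lemma at all: it is quoted verbatim from Foissy's notes \cite{Fo3} and used as a black box, so there is no in-paper argument to compare against. Your write-up supplies the standard proof, and it correctly isolates the one genuinely delicate point --- that the transpose of the multiplication lands in $H^\ast\otimes H^\ast$ rather than merely in $(H\otimes H)^\ast$ precisely because a homogeneous functional on $H^{(n)}$ can only pair nontrivially with the finitely many summands $H^{(p)}\otimes H^{(q)}$ with $p+q=n$, each of which is finite dimensional so that $(H^{(p)})^\ast\otimes(H^{(q)})^\ast\to(H^{(p)}\otimes H^{(q)})^\ast$ is an isomorphism. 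The remaining verifications (transposing the bialgebra diagrams, reflexivity of each finite-dimensional graded piece for $H^{\ast\ast}\simeq H$, and the degreewise identification $((H\otimes H)^{(n)})^\ast\simeq\bigoplus_{i=0}^{n}(H^{(i)})^\ast\otimes(H^{(n-i)})^\ast$) are routine as you say. The only caveat worth recording is that the paper nominally works over a commutative base ring $\bfk$, so ``finite dimensional'' should be read as ``free of finite rank'' (or finitely generated projective) for the duality isomorphisms and reflexivity to hold; in the intended application each $H(\calg)^{(n)}$ is free on a finite basis, so this is harmless.
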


The quintuple $(H(\calg),\,\mul,\,\ul,\,\col,\,\epl)$ is a connected graded bialgebra by Theorem~\mref{th:thcg} and $H(\calg)^{(n)}$ is finite dimensional. Then its connected graded dual inherits also a graded Hopf algebra by Lemma~\mref{le:ledu}, denoted by $(H(\calg)^\ast,\,\col^\ast,\,\epl^{\ast},\,\mul^\ast,\,\ul^\ast)$.

For each $F\in M(\calg)$, we define
\begin{align}
Z_F:H(\calg)\longrightarrow\bfk,\,\, F'\longrightarrow\delta_{F,F'}\,\,\text{ for }\,\,F'\in M(\calg),
\mlabel{eq:eqdu}
\end{align}
 where $\delta_{F,F'}$ is Kronecker function. As $\left\{Z_F \medmid F\in M(\calg)^{(n)}\right\}$ is a basis of $(H(\calg)^{(n)})^{\ast}$, $\left\{Z_F \medmid F\in M(\calg)\right\}$ is a basis of $H(\calg)^\ast$.

 The following result is a combinatorial description of the  multiplication in $H(\calg)^\ast$.
 \begin{theorem}
 Let $G$ be graph and $F_1,\,F_2\in M(\calg)$.  The product of $Z_{F_1}$ and $Z_{F_2}$ is given by
 \begin{equation}
 Z_{F_1}Z_{F_2}=\sum_{F\in M(\calg)}n(F_1,F_2;F)Z_{F},
 \end{equation}
 where $n(F_1,F_2;F)$ is the number of $F_1\ot F_2$ in  $\col(F)$.
 \mlabel{le:ledup}
 \end{theorem}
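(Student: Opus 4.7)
The plan is to unpack the definition of the multiplication $\col^\ast$ on the graded dual. By Lemma~\mref{le:ledu}, the multiplication on $H(\calg)^\ast$ is the transpose of the coproduct $\col$: for any $f,g\in H(\calg)^\ast$ and $x\in H(\calg)$, one has $(f\cdot g)(x)=(f\ot g)(\col(x))$. Since $\{Z_F\medmid F\in M(\calg)\}$ is a basis of $H(\calg)^\ast$, it suffices to evaluate $(Z_{F_1}Z_{F_2})(F)$ on each basis element $F\in M(\calg)$ and then conclude by linearity.

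Next, I would plug in the combinatorial formula for $\col$ from Eq.~(\mref{eq:comni}) together with the definition of $Z_F$ from Eq.~(\mref{eq:eqdu}) to compute
\begin{align*}
(Z_{F_1}Z_{F_2})(F)
&=(Z_{F_1}\ot Z_{F_2})(\col(F))
=\sum_{U\uplus V=V(F)}Z_{F_1}(F[U])\,Z_{F_2}(F[V])\\
&=\#\bigl\{(U,V)\medmid U\uplus V=V(F),\ F[U]=F_1,\ F[V]=F_2\bigr\}.
\end{align*}
By the definition of $n(F_1,F_2;F)$ as the number of occurrences of the simple tensor $F_1\ot F_2$ in the expansion of $\col(F)$, the last quantity is exactly $n(F_1,F_2;F)$. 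Comparing with
$\bigl(\sum_{F'\in M(\calg)}n(F_1,F_2;F')Z_{F'}\bigr)(F)=n(F_1,F_2;F)$, the stated identity follows.

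Finally, I would briefly verify that the right-hand side of the claimed formula is a well-defined element of $H(\calg)^\ast$. Since $n(F_1,F_2;F)$ vanishes unless $|V(F)|=|V(F_1)|+|V(F_2)|$, and each graded piece $H(\calg)^{(n)}$ is finite-dimensional by Theorem~\mref{th:thcg}\,(\mref{it:ita}), only finitely many $F$ contribute, so the sum lies in a single graded component of $H(\calg)^\ast$ and defines a bona fide linear functional.

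The argument is really a routine dualization, so there is no deep obstacle; the one point meriting care is that $V(F)$ must be treated as a multiset (as witnessed by the symbol $\uplus$ in Eq.~(\mref{eq:comni})). This is already absorbed into the convention defining $n(F_1,F_2;F)$ as a multiplicity rather than a cardinality of distinct decompositions, so different splittings $U\uplus V=V(F)$ that happen to yield the same pair $(F[U],F[V])=(F_1,F_2)$ are automatically counted with the right weight on both sides of the identity.
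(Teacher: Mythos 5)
Your proposal is correct and follows essentially the same route as the paper: dualize the coproduct, evaluate $(Z_{F_1}Z_{F_2})(F)=(Z_{F_1}\ot Z_{F_2})(\col(F))$ on basis elements via Eq.~(\mref{eq:comni}) and Eq.~(\mref{eq:eqdu}), and identify the resulting count of splittings $U\uplus V=V(F)$ with $n(F_1,F_2;F)$. Your added observations on the finiteness of the sum and the multiset convention are sound but not needed beyond what the paper's argument already contains.
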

 \begin{proof}
 Let $G$ be a graph. We consider the basis elements of $H(\calg)^\ast$. For any $F_1,\,F_2\in M(\calg)$, we suppose
 $$Z_{F_1}Z_{F_2}=\sum_{F'\in M(\calg)}c_{F_1,F_2}^{F'}Z_{F'}.$$
For any $F\in M(\calg)$, we have
$$\sum_{F'\in M(\calg)}c_{F_1,F_2}^{F'}Z_{F'}(F)=c_{F_1,F_2}^{F}Z_{F}(F)=c_{F_1,F_2}^{F},$$
and
 \begin{align*}
 Z_{F_1}Z_{F_2}(F)&=\,\col^\ast(Z_{F_1}\ot Z_{F_2})(F)=\,(Z_{F_1}\ot Z_{F_2})(\col(F))\\
 &=\,(Z_{F_1}\ot Z_{F_2})\left(\sum_{U\uplus V=V(F)}F[U]\ot F[V]\right)\\
 &=\,\sum_{U\uplus V=V(F)}Z_{F_1}(F[U])\ot Z_{F_2}(F[V])\\
 &=\,\sum_{U\uplus V=V(F)}\delta_{F_1,F[U]}\ot \delta_{F_2,F[V]}\\
 &=\,n(F_1,F_2;F)\quad(\text{by Eq.~(\mref{eq:comni})}).
 \end{align*}
Thus  $c_{F_1,F_2}^{F}=n(F_1,F_2;F)$. This completes the proof.
 \end{proof}
 \begin{exam}Let $G$ be a graph in Item~(\mref{ex:itex3}) of Example~\mref{ex:exs}. We consider the product of $H(\calg)^\ast$.
 \begin{enumerate}
 \item
 Let $F_1=\tdun{1}$ and $F_2=\tddeuxx{$\tiny 2$}{$\tiny 1$}$. Then
 $$Z_{\tdun{1}}Z_{\tddeuxx{$\tiny 2$}{$\tiny 1$}}=Z_{\tdun{1}\tddeuxx{$\tiny 2$}{$\tiny 1$}}+Z_{\tddeuxx{$\tiny 2$}{$\tiny 1$}\tdun{1}}.$$
 \item
 Let $F_1=\tdun{3}$ and $F_2=\tddeuxx{$\tiny 2$}{$\tiny 1$}$. We have
 $$Z_{\tdun{3}}Z_{\tddeuxx{$\tiny 2$}{$\tiny 1$}}=Z_{\tdun{3}\tddeuxx{$\tiny 2$}{$\tiny 1$}}+Z_{\tddeuxx{$\tiny 2$}{$\tiny 1$}\tdun{3}}+Z_{\,\,\trpoinb{$2$}{$1$}{$3$}}\,\,.$$
 \end{enumerate}
 \end{exam}

 The coproduct $\mul^{\ast}$ of $H(\calg)^\ast$ is given by
 \begin{lemma}
 Let $G$ be graph and $\Gamma_1\cdots\Gamma_n\in M(\calg)$. The coproduct of $Z_{\Gamma_1\cdots\Gamma_n}\in M(\calg)^\ast$ is given by
 \begin{equation}
\mul^{\ast}( Z_{\Gamma_1\cdots\Gamma_n})=\sum_{0\leq i\leq n}Z_{\Gamma_1\cdots\Gamma_i}\ot Z_{\Gamma_{i+1}\cdots\Gamma_n},
 \end{equation}
 with the convention that $Z_{\Gamma_1\Gamma_0}=\etree$ and $Z_{\Gamma_{n+1}\Gamma_n}=\etree$.
 \mlabel{le:leducp}
 \end{lemma}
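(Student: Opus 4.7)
The plan is to use the universal property that defines $\mul^{\ast}$ as the transpose of the multiplication $\mul$: for any $\phi \in H(\calg)^{\ast}$ and any $x, y \in H(\calg)$,
$$\mul^{\ast}(\phi)(x\ot y) \;=\; \phi(\mul(x\ot y)) \;=\; \phi(xy).$$
Since $\{Z_{F_1}\ot Z_{F_2}\mid F_1,F_2\in M(\calg)\}$ is (by Lemma~\mref{le:ledu}) a basis of $H(\calg)^{\ast}\ot H(\calg)^{\ast}\simeq (H(\calg)\ot H(\calg))^{\ast}$ dual to $\{F_1\ot F_2\}$, I would expand
$$\mul^{\ast}(Z_{\Gamma_1\cdots\Gamma_n}) \;=\; \sum_{F_1,F_2\in M(\calg)} d^{\,F_1,F_2}_{\Gamma_1\cdots\Gamma_n}\, Z_{F_1}\ot Z_{F_2},$$
and extract the coefficients by evaluating both sides on an arbitrary basis element $F_1\ot F_2$.

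The key computation is then
$$d^{\,F_1,F_2}_{\Gamma_1\cdots\Gamma_n} \;=\; \mul^{\ast}(Z_{\Gamma_1\cdots\Gamma_n})(F_1\ot F_2) \;=\; Z_{\Gamma_1\cdots\Gamma_n}(F_1F_2) \;=\; \delta_{\Gamma_1\cdots\Gamma_n,\,F_1F_2},$$
by the definition of $Z_{(-)}$ in Eq.~(\mref{eq:eqdu}). So only those pairs $(F_1,F_2)$ with $F_1F_2=\Gamma_1\cdots\Gamma_n$ contribute, each with coefficient $1$.

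The last step, which I expect to be the only point requiring care, is the combinatorial identification of those pairs. Since $M(\calg)$ is the \emph{free} monoid on $\calg$, concatenation is cancellative and factorizations are unique: the equality $F_1F_2=\Gamma_1\cdots\Gamma_n$ forces $F_1$ and $F_2$ to be an initial and the complementary terminal subword of $\Gamma_1\cdots\Gamma_n$, i.e.\ $F_1=\Gamma_1\cdots\Gamma_i$ and $F_2=\Gamma_{i+1}\cdots\Gamma_n$ for exactly one index $i\in\{0,1,\ldots,n\}$ (with the stated convention that the boundary cases correspond to $F_1=\etree$ or $F_2=\etree$). Summing the contributions over this unique split point yields
$$\mul^{\ast}(Z_{\Gamma_1\cdots\Gamma_n}) \;=\; \sum_{0\leq i\leq n} Z_{\Gamma_1\cdots\Gamma_i}\ot Z_{\Gamma_{i+1}\cdots\Gamma_n},$$
which is the claimed formula. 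The only subtlety worth a sentence in the write-up is to note that the convention on $Z_{\Gamma_1\Gamma_0}$ and $Z_{\Gamma_{n+1}\Gamma_n}$ matches the empty-word factorizations $F_1=\etree$, $F_2=\Gamma_1\cdots\Gamma_n$ and $F_1=\Gamma_1\cdots\Gamma_n$, $F_2=\etree$, which are the terms $i=0$ and $i=n$ in the sum.
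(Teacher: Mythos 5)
Your proposal is correct and follows essentially the same route as the paper: dualize the multiplication, expand $\mul^{\ast}(Z_{\Gamma_1\cdots\Gamma_n})$ in the basis $\{Z_{F_1}\ot Z_{F_2}\}$, and read off the coefficient $\delta_{\Gamma_1\cdots\Gamma_n,\,F_1F_2}$ by evaluating on $F_1\ot F_2$. Your closing remark that freeness of $M(\calg)$ makes the factorizations $F_1F_2=\Gamma_1\cdots\Gamma_n$ correspond bijectively to the $n+1$ split points is a detail the paper leaves implicit, and it is a worthwhile addition.
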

 \begin{proof}
 We consider the basis elements of $H(\calg)^{\ast}$.
 Suppose $$m^\ast(Z_{\Gamma_1\cdots\Gamma_n})=\sum_{F',F''\in M(\calg)}c_{F',F''}Z_{F'}\ot Z_{F''}.$$
  For any $F_1, F_2 \in M(\calg)$, we have
 \begin{align*}
 \mul^{\ast}( Z_{\Gamma_1\cdots\Gamma_n})(F_1\ot F_2)=Z_{\Gamma_1\cdots\Gamma_n}(\mul(F_1\ot F_2))=Z_{\Gamma_1\cdots\Gamma_n}(F_1F_2)=\delta_{\Gamma_1\cdots\Gamma_n, F_1F_2},
 \end{align*}
 and
 \begin{align*}
&\left(\sum_{F',F''\in M(\calg)}c_{F',F''}(Z_{F'}\ot Z_{F''})\right)(F_1\ot F_2)\\
=&\,\sum_{F',F''\in M(\calg)}c_{F',F''}Z_{F'}(F_1)\ot Z_{F''}(F_2)\\
=&\,\sum_{F',F''\in M(\calg)}c_{F',F''}\delta_{F',F_1}\ot \delta_{F'',F_2}\\
=&\,c_{F_1,F_2}.
 \end{align*}
 Thus,
$c_{F_1,F_2}=1$ if $\Gamma_1\cdots\Gamma_n=F_1F_2$ and $c_{F_1,F_2}=0$ otherwise. This completes the proof.
 \end{proof}
 \begin{exam}
 Let $G$ be a graph in Item~(\mref{ex:itex3}) of Example~\mref{ex:exs}. We consider the coproduct of $H(\calg)^\ast$.  Let $\Gamma_1=\tdun{1}$ and $\Gamma_2=\tddeuxx{$\tiny 2$}{$\tiny 1$}$. Then
 $$Z_{\tdun{1}\tddeuxx{$\tiny 2$}{$\tiny 1$}}=Z_{\tdun{1}\tddeuxx{$\tiny 2$}{$\tiny 1$}}\ot \etree+\etree\ot Z_{\tdun{1}\tddeuxx{$\tiny 2$}{$\tiny 1$}}+Z_{\tdun{1}}\ot Z_{\tddeuxx{$\tiny 2$}{$\tiny 1$}}.$$
 \end{exam}
\subsection{ The algebra morphisms induced from graph homomorphisms}
In this subsection, we are going to consider algebra morphisms induced by graph homomorphisms.
For this, let us recall the concept of a graph homomorphism.

\begin{defn}\cite[p.~4]{Heb}
Let $G_1$ and $G_2$ be two graphs.
A {\bf homomorphism} of $G_1$ to $G_2$, written as $f: G_1\rightarrow G_2$, is a mapping $f: V(G_1)\rightarrow V(G_2)$ such that $(f(u),f(v))\in E(G_2)$ whenever $(u,v)\in E(G_1)$.
\mlabel{de:ghom}
\end{defn}

Next, we give the main result in this subsection.
Let $f: G_{1}\rightarrow G_{2}$ be a graph homomorphism. Then $f$ can be restricted to any subgraphs of $G_1$, still denoted by $f$.
We define the linear map $H(f): H(\calg_{1})\rightarrow H(\calg_{2})$  given by $$H(f)(\etree_{G_1}):=\etree_{G_{2}}$$ and
\begin{equation}
H(f)(F):=H(f)(\Gamma_1\cdots\Gamma_n):=f(\Gamma_1)\cdots f(\Gamma_n)\,\,\text{ for }\,\,F=\Gamma_1\cdots\Gamma_n\in M(\calg_1)\,\,\text{ with }\,\,n\geq 1.
 \mlabel{eq:demor}
\end{equation}

\begin{theorem}
The map $H(f): H(\calg_{1})\rightarrow H(\calg_{2})$ is an algebra morphism.
\mlabel{th:grhop}
\end{theorem}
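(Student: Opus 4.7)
The plan is to verify directly from the definition in Eq.~(\mref{eq:demor}) that $H(f)$ respects both the multiplication $\mul$ and the unit $\ul$ of the algebra $(H(\calg), \mul, \ul)$ constructed in Subsection~\mref{subs:sub1}. Since $H(\calg_1)$ is the free $\bfk$-module on $M(\calg_1)$, it suffices to check these identities on basis elements, that is, on monomials $F = \Gamma_1 \cdots \Gamma_n \in M(\calg_1)$.

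Before anything else, I would address well-definedness of $H(f)$, which is the one place where the hypothesis that $f$ is a graph homomorphism is really used. Given $\Gamma \in \calg_1$, i.e., a nonempty connected subgraph of $G_1$, I must verify that $f(\Gamma) \in \calg_2$. Since $(u,v) \in E(\Gamma) \subseteq E(G_1)$ implies $(f(u),f(v)) \in E(G_2)$ by Definition~\mref{de:ghom}, the image $f(\Gamma)$ is a subgraph of $G_2$; the absence of loops in $G_2$ moreover forces $f(u) \neq f(v)$ on every edge. Connectedness then follows because any two vertices $f(u), f(v)$ in $f(\Gamma)$ are joined by the image of a $u\text{–}v$ path in $\Gamma$, which is a walk in $f(\Gamma)$ and hence contains a path. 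Thus $f(\Gamma) \in \calg_2$, so the right-hand side of Eq.~(\mref{eq:demor}) lies in $M(\calg_2)$ and the linear extension $H(f): H(\calg_1) \to H(\calg_2)$ is well-defined.

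Next, preservation of the unit is immediate: by definition $H(f)(\etree_{G_1}) = \etree_{G_2}$. For preservation of multiplication, I would take two basis elements $F_1 = \Gamma_1 \cdots \Gamma_m$ and $F_2 = \Gamma'_1 \cdots \Gamma'_n$ in $M(\calg_1)$. Since $\mul$ is concatenation, $F_1 F_2 = \Gamma_1 \cdots \Gamma_m \Gamma'_1 \cdots \Gamma'_n$, and applying Eq.~(\mref{eq:demor}) directly gives
\[
H(f)(F_1 F_2) = f(\Gamma_1)\cdots f(\Gamma_m) f(\Gamma'_1)\cdots f(\Gamma'_n) = H(f)(F_1)\, H(f)(F_2).
\]
The edge cases where $F_1$ or $F_2$ equals $\etree_{G_1}$ are handled separately using $H(f)(\etree_{G_1}) = \etree_{G_2}$ and the fact that $\etree_{G_2}$ is the unit of $H(\calg_2)$. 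Extending by bilinearity finishes the proof.

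There is no serious obstacle here; the real content of the statement is the well-definedness step, i.e., that a graph homomorphism sends connected subgraphs to connected subgraphs without collapsing any edge. Once that is in place, multiplicativity is tautological from the fact that both $\mul$ on $H(\calg_1)$ and $\mul$ on $H(\calg_2)$ are free-monoid concatenation and $H(f)$ was defined monomial-by-monomial precisely to commute with this concatenation.
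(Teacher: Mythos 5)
Your proof is correct and follows essentially the same route as the paper: reduce to basis elements and observe that multiplicativity is immediate from Eq.~(\mref{eq:demor}) since both products are free-monoid concatenation. Your extra paragraph on well-definedness (that a graph homomorphism sends a nonempty connected subgraph to a nonempty connected subgraph) is a worthwhile addition that the paper only acknowledges later, in the proof of Theorem~\mref{th:thcf}, but it does not change the substance of the argument.
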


\begin{proof}
Let  $$F_1=\Gamma_1\cdots\Gamma_m\,\,\text{ and }\,\, F_2=\Gamma'_1\cdots\Gamma'_n\in M(\calg_1).$$ Then by Eq.~(\mref{eq:demor}),
\begin{align*}
H(f)(F_1F_2)=&\,H(f)(\Gamma_{1}\cdots \Gamma_{m}\Gamma'_{1}\cdots \Gamma'_{n})=\,f(\Gamma_{1})\cdots f(\Gamma_{m})f(\Gamma'_{1})\cdots f(\Gamma'_{n})\\
=&\,H(f)(\Gamma_{1}\cdots \Gamma_{m})\,H(f)(\Gamma'_{1}\cdots \Gamma'_{n})\\
=&\,H(f)(F_1)\,H(f)(F_2).
\end{align*}
Thus $H(f)$ is an algebra morphism. This completes the proof.
\end{proof}

As an application of Theorem~\mref{th:grhop}, we obtain a functor from the graph category to the algebra category.
Let us recall

\begin{defn}~\cite[Definition~7.1]{Hun}
A {\bf category } is a class $\mathfrak{C}$ of objects (denoted $A, B, C, \cdots$) together with
\begin{enumerate}
\item a class of disjoint sets, denoted $\hom(A,B)$, one for each pair of objects in $\mathfrak{C}$; (an element $f$ of $\hom(A,B)$ is called a {\bf morphism} from $A$ to $B$ and is denoted $f: A\rightarrow B$);
\item for each triple $(A, B, C)$ of objects of $\mathfrak{C}$ a function:
$$\hom(B,C)\times\hom(A,B)\rightarrow\hom(A,C);$$
(for morphisms $f: A\rightarrow B$, $g: B\rightarrow C$, this function is written $(g,f)\mapsto g\circ f$ and $g\circ f: A\rightarrow C$ is called the {\bf composite} of $f$ and $g$); all subject to the two axioms:
\begin{enumerate}
\item
Associativity. If $f: A\rightarrow B$, $g: B\rightarrow C$, $h:C\rightarrow D$ are morphisms of $\mathfrak{C}$, then $h\circ(g\circ f)=(h\circ g)\circ f$.
\item
Identity. For each object $B$ of $\mathfrak{C}$ there exists a morphism $1_B: B\rightarrow B$ such that for any $f: A\rightarrow B$, $g: B\rightarrow C$,
$$1_B\circ f=f\,\,\text{ and }\,\,g\circ 1_B=g.$$
\end{enumerate}
\end{enumerate}
\end{defn}

\begin{defn}~\cite[Definition~1.1]{Hun}
Let $\mathfrak{C}$ and $\mathfrak{D}$ be categories. A {\bf covariant functor T} from $\mathfrak{C}$ to $\mathfrak{D}$ (denoted $T:\mathfrak{C}\rightarrow \mathfrak{D}$) is a pair of functions (both denoted by $T$), an object function that assigns to each object $C$ of $\mathfrak{C}$ an object $T(C)$ of $\mathfrak{D}$ and a morphism function which assigns to each morphism $f: C\rightarrow D$ of $\mathfrak{C}$ a morphism
$$T(f): T(C)\rightarrow T(D)$$
of $\mathfrak{D}$, such that
\begin{enumerate}
\item
$T(1_C)=1_{T(C)}$ for every identity morphism $1_C$ of $\mathfrak{C}$;\mlabel{it:1a}
\item
$T(g\circ f)=T(g)\circ T(f)$ for any two morphisms $f,g$ of $\mathfrak{C}$ whose composite $g\circ f$ is defined.\mlabel{it:1b}
\end{enumerate}
\mlabel{de:decof}
\end{defn}
Let $\graphc$ be the class of all graphs, for $G_1, G_2\in\graphc$, $\hom(G_1,G_2)$ is the set of all  graph homomorphisms $f: G_1\rightarrow G_2$. Then $\graphc$ is a category, called the {\bf graph category}.
Let $\hopfc$ be the {\bf $\bfk$-algebra category} whose objects are all $\bfk$-algebra; the  $\hom(A_1,A_2)$ is the set of all $\bfk$-algebra morphisms $\varphi: A_1\rightarrow A_2$.

Now we arrive at out main result of this section.

\begin{theorem}
Let $\graphc$ be the graph category and $\hopfc$ the $\bfk$-algebra category.
Define
$$H(-):\graphc\rightarrow\hopfc,\,\, G\mapsto H(\calg), \,\, f\mapsto H(f),$$
where $G$ is a graph in $\graphc$, $f: G_1\rightarrow G_2$ is a graph homomorphism and $H(f): H(\calg_1)\rightarrow H(\calg_2)$
is the algebra morphism obtained in Theorem~\mref{th:grhop}.
Then $H(-)$ is a covariant functor.
\mlabel{th:thcf}
\end{theorem}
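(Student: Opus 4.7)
The plan is to verify the two clauses of Definition~\mref{de:decof} by unpacking Eq.~(\mref{eq:demor}) on basis elements, with the well-definedness of the object and morphism assignments already supplied by earlier results.

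First I would observe that the object assignment $G\mapsto H(\calg)$ indeed lands in $\hopfc$: by the construction in Subsection~\mref{subs:sub1} the triple $(H(\calg),\mul,\ul)$ is a $\bfk$-algebra. For the morphism assignment, I need to know that a graph homomorphism $f:G_1\to G_2$ really does restrict to subgraphs in a way that makes Eq.~(\mref{eq:demor}) make sense; the key sub-check is that for any $\Gamma\in\calg_1$ the image $f(\Gamma)$ lies in $\calg_2$, which follows from the edge-preservation property of Definition~\mref{de:ghom} together with the connectedness of $\Gamma$. Once that is in hand, Theorem~\mref{th:grhop} provides $H(f)\in\hom_{\hopfc}(H(\calg_1),H(\calg_2))$, so the morphism assignment lands where it should.

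For axiom~(\mref{it:1a}), I would evaluate $H(1_G)$ on a basis element $F=\Gamma_1\cdots\Gamma_n\in M(\calg)$ using Eq.~(\mref{eq:demor}): since $1_G$ acts as the identity on $V(G)$ and on $E(G)$, one has $1_G(\Gamma_i)=\Gamma_i$ for each $i$, so $H(1_G)(F)=\Gamma_1\cdots\Gamma_n=F$, together with $H(1_G)(\etree_G)=\etree_G$. Extending by linearity gives $H(1_G)=1_{H(\calg)}=\id_{H(\calg)}$.

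For axiom~(\mref{it:1b}), given graph homomorphisms $f:G_1\to G_2$ and $g:G_2\to G_3$, I would again test on a basis element $F=\Gamma_1\cdots\Gamma_n\in M(\calg_1)$. Applying Eq.~(\mref{eq:demor}) twice,
\[
H(g)\circ H(f)(F)=H(g)\bigl(f(\Gamma_1)\cdots f(\Gamma_n)\bigr)=g(f(\Gamma_1))\cdots g(f(\Gamma_n))=(g\circ f)(\Gamma_1)\cdots (g\circ f)(\Gamma_n)=H(g\circ f)(F),
\]
with the empty-graph case handled separately and identically. Linearity then yields the identity of maps $H(g)\circ H(f)=H(g\circ f)$. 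The main technical point throughout is the observation that $f(\Gamma)\in\calg_2$ when $\Gamma\in\calg_1$; beyond that the verification is a direct unwinding of Eq.~(\mref{eq:demor}), so no real obstacle is expected.
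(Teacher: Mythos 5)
Your proposal is correct and follows essentially the same route as the paper: both verify the two functor axioms of Definition~\ref{de:decof} by evaluating $H(1_G)$ and $H(g)\circ H(f)$ on basis elements $F=\Gamma_1\cdots\Gamma_n$ via Eq.~(\ref{eq:demor}), and both rest on the same key observation that the image of a connected subgraph under a graph homomorphism is again a connected subgraph. Your additional remarks on well-definedness of the object and morphism assignments are a slight expansion of what the paper leaves implicit, but the argument is the same.
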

\begin{proof}
 By Theorem~\mref{th:grhop}, it suffices to prove Items~(\mref{it:1a}) and~(\mref{it:1b}) in Definition~\mref{de:decof}.
For Item~(\mref{it:1a}), let $F=\Gamma_1\cdots\Gamma_n\in M(\calg)$. Then by Eq.~(\mref{eq:demor}),
\begin{align*}
H(1_{G})(F)=&\,H(1_{G})(\Gamma_1\cdots\Gamma_n)=\,1_{G}(\Gamma_1)\cdots 1_{G}(\Gamma_n)=\,\Gamma_1\cdots\Gamma_n\\
=&\,1_{H(G)}(\Gamma_1)\cdots 1_{H(G)}(\Gamma_n)=\,1_{H(G)}(\Gamma_1\cdots\Gamma_n)=\,1_{H(G)}(F).
\end{align*}
 For Item~(\mref{it:1b}), let $$f: G_1\rightarrow G_2\,\,\text{ and }\,\,g: G_2\rightarrow G_3,$$ be  graph homomorphisms. For $F=\Gamma_1\cdots\Gamma_n\in M(\calg_1)$,
 \begin{align*}
 H(g)\circ H(f)(F)&=\,H(g)\circ H(f)(\Gamma_1\cdots\Gamma_n)=\,H(g)(f(\Gamma_1)\cdots f(\Gamma_n))\\
&=\,g(f(\Gamma_1))\cdots g(f(\Gamma_n))\\
&=\,(g\circ f)(\Gamma_1)\cdots (g\circ f)(\Gamma_n)\\
&=\,H(g\circ f)(\Gamma_1\cdots\Gamma_n)\\
 &=H(g\circ f)(F),
 \end{align*}
where the third equation employs the fact that the image of a connected subgraph of a graph homomorphism  is a connected subgraph.
Thus  $H(g)\circ H(f)(F)=H(g\circ f)(F)$. This completes the proof.
\end{proof}

\medskip

\noindent {\bf Acknowledgments}: This work was supported by the National Natural Science Foundation
of China (Grant No.\@ 11571155, 11771191, 11861051). We thank the anonymous referee for valuable suggestions helping to improve the paper.

\medskip

\end{document}